\crefname{hypothesis}{Hypothesis}{Hypotheses}
\newcommand{\bB}{\bar{B}}
\newcommand{\bL}{\bar{L}}
\newcommand{\eps}{\varepsilon}
\newcommand{\Kry}{\mathcal{K}}
\newcommand{\lamk}{_{\lambda,k}}
\newcommand{\mcE}{\mathcal{E}}
\newcommand{\mcF}{\mathcal{F}}
\newcommand{\mcR}{\mathcal{R}}
\newcommand{\R}{\mathbb{R}}
\newcommand{\RRE}{\mbox{RRE}}
\newcommand{\true}{{\text{\footnotesize{{true}}}}}
\newcommand{\whA}{\widehat{A}}
\newcommand{\whb}{\widehat{b}}
\newcommand{\whE}{\widehat{E}}
\newcommand{\whmcE}{\widehat{\mathcal{E}}}
\newcommand{\whH}{\widehat{H}}
\newcommand{\whr}{\widehat{r}}
\newcommand{\wtAlam}{\widetilde{A}_{\lambda}}
\newcommand{\wtE}{\widetilde{E}}
\newcommand{\wteps}{\widetilde{\eps}}
\newcommand{\wtmcE}{\widetilde{\mathcal{E}}}
\newcommand{\wtr}{\widetilde{r}}
\newcommand{\wts}{\breve{s}}
\title{Regularization by inexact Krylov methods\\
with applications to 
blind 
deblurring
\thanks{Submitted to the editors DATE.
\funding{{S. Gazzola is partially funded by EPSRC, under grant EP/T001593/1. M. Sabat\'e Landman is supported by a scholarship from the EPSRC Centre for Doctoral Training in Statistical Applied Mathematics at Bath (SAMBa), under project EP/L015684/1.}}}}
\author{Silvia Gazzola \thanks{Department of Mathematical Sciences, University of Bath, United Kingdom 
(\email{S.Gazzola@bath.ac.uk}, \url{https://people.bath.ac.uk/sg968/})}
\and Malena Sabat\'{e} Landman\thanks{Department of Mathematical Sciences, University of Bath, United Kingdom 
(\email{M.Sabate.Landman@bath.ac.uk}, \url{https://people.bath.ac.uk/msl39/})}}
\begin{document}

\maketitle

\begin{abstract}
This paper is concerned with the regularization of large-scale discrete inverse problems by means of inexact Krylov methods. Specifically, we derive two new inexact Krylov 
methods that can be efficiently applied to unregularized or Tikhonov-regularized least squares problems, and 
we study their theoretical properties, including links with their exact counterparts and 
strategies to monitor the amount of inexactness. 
We then apply the new methods to separable nonlinear inverse problems arising in 
blind deblurring. 
In this setting inexactness stems from the uncertainty in the parameters defining the blur, which may be 
recovered using a variable projection method leading to an inner-outer iteration scheme (i.e., one cycle of inner iterations is performed to solve one linear deblurring subproblem for any intermediate 
values of the blurring parameters computed by a nonlinear least squares solver). 
The new inexact solvers can naturally handle varying inexact blurring parameters while solving the linear deblurring subproblems, allowing for a much reduced number of total iterations and substantial computational savings with respect to their exact counterparts. 
\end{abstract}

\begin{keywords}
Inexact Krylov methods, separable nonlinear inverse problems, variable projection method, Tikhonov regularization, 
image deblurring, blind deblurring 
\end{keywords}

\begin{AMS}
65F20, 65F22, 65F30
\end{AMS}

\section{Introduction}\label{sec:intro}
Linear discrete inverse problems of the form 
\begin{equation}\label{eq:linsys}
\min_{x\in\R^n}\|Ax -b\|\,,\quad\mbox{where}\quad b= b_{\true} + e =Ax_{\true} + e\,,\; \quad  \frac{\|e\|}{\|b_{\true}\|}\ll 1\,,
\end{equation}
and where $e$ is an unknown perturbation 
affecting the data $b$, arise in many engineering and scientific applications; see \cite{Chung2015, JulianneJim, book1, PCH10}. Here and in the following, $\|\cdot\|$ 
denotes the vector 2-norm and the induced matrix 2-norm. We assume that the discretized forward operator $A\in\R^{m\times n}$, with $m\geq n$, 
has full column rank with singular values quickly decaying and clustering at zero, so that $A$ is ill-conditioned. Although, under these assumptions, problem (\ref{eq:linsys}) has a unique solution, the ill-conditioning of $A$ and the presence of the noise $e$ in $b$ prompt the use of some regularization to recover a meaningful approximation of $x_{\true}$. 

In this paper we consider Tikhonov regularization, which computes
\begin{equation}\label{tikh}
x_{\true}\approx x_{\lambda} = \arg\min_{x\in\R^n}\:
\|Ax-b\|^2\,+\,\lambda^2\|x-x_0\|^2\,,
\end{equation}
where an initial estimate $x_0$ for 
$x_{\true}$ is included. Basically, Tikhonov regularization replaces the original least squares (LS) problem (\ref{eq:linsys}) by a \emph{penalized LS problem}, where a `small' value of the regularization term $\|x-x_0\|^2$, weighted by a regularization parameter $\lambda>0$, is enforced. 
Prescribing a suitable value of $\lambda$ is crucial to compute solutions that are neither under-regularized ($\lambda$ too small) nor over-regularized ($\lambda$ too big). 
Equivalently, with a simple change of variable, problem (\ref{tikh}) can be reformulated as
\begin{equation}\label{tikhx0}
z_{\lambda} = \arg\min_{z\in\R^n}\:
\|Az-r_0\|^2\,+\,\lambda^2\|z\|^2\,,
\quad\mbox{where}\quad
\begin{array}{lcl}
r_0 &=& b - Ax_0\\
x_{\lambda}&=&x_0+z_{\lambda}
\end{array}.
\end{equation}
Although (\ref{tikhx0}) has a closed-form solution, 
when dealing with a large-scale and unstructured $A$, and without prior knowledge of a suitable value of $\lambda$, computing a good 
$z_\lambda$ would potentially involve repeatedly applying a (matrix-free) iterative solver for LS problems, one for each considered value of $\lambda$. In this framework, many Krylov methods are successfully applied to either 
(\ref{eq:linsys}) (i.e., as stand-alone solvers that regularize by early termination of the iterations), or (\ref{tikhx0}) (in a so-called \emph{hybrid} fashion, i.e., by combining projection onto Krylov subspaces of increasing dimensions and Tikhonov regularization, with the possibility of efficiently and adaptively choosing $\lambda$ as the iterations progress); see, for instance, \cite{CMRS00, Chung2015, survey, newsurvey} and the references therein. Among the Krylov methods routinely used for regularization, the mathematically equivalent LSQR and CGLS methods are arguably among the the most popular ones, as their theoretical properties and practical performance are generally well-understood; see \cite{Hanke01, HansenMin}. 

In many situations the (discretized) forward operator $A$ is not fully known and, 
if we assume that $A$ depends on a few unknown parameters $y\in\R^p$, with $p\ll n$, then these should be recovered replacing (\ref{eq:linsys}) by a problem of the form  
\begin{equation}\label{eq:sepnonlinsys}
\min_{x\in\R^n, y\in\R^p}\|A(y)x -b\|\,,\quad\mbox{where}\quad b= b_{\true} + e =A(y_\true)x_{\true} + e\,.
\end{equation}
This simple generalisation of (\ref{eq:linsys}) results in a much more difficult problem to solve: indeed, problem (\ref{eq:sepnonlinsys}) is jointly nonlinear and nonconvex in $(x,y)$, so that, in particular, it may not have a unique solution. Moreover, since (\ref{eq:sepnonlinsys}) is ill-posed, one should apply some regularization and consider, for instance, 
\begin{equation}
z_\lambda = \arg\min_{z\in\R^n, y\in\R^p}\:
\|
A(y)z - r_0\|^2 + 
\lambda^2\|z\|^2,
\quad\mbox{where}\quad
\begin{array}{lcl}
r_0 &=& b - A(y)x_0\\
x_{\lambda}&=&x_0+z_{\lambda}
\end{array}.
\label{eqtikhy}
\end{equation}
The above problem generalizes (\ref{tikhx0}); note that Tikhonov regularization is 
applied to $z$ 
only, while regularization on $y$ is implicitly enforced by assuming $p\ll n$ (i.e., by considering a so-called reduced parameter space). 
Problems like (\ref{eq:sepnonlinsys}), (\ref{eqtikhy}) arise in a variety of signal and image processing tasks, such as instrument calibration \cite{JulianneJim, GolubVP} and super-resolution \cite{superres}, just to name a few. In this paper we are particularly interested in spatially invariant blind deblurring 
\cite{Chung2015}, where $b$ is a blurred and noisy image (reshaped as a vector) and $A(y)$ encodes information about a parametric blur (i.e., defined by the unknown parameters $y_\true$) that corrupts every pixel of the unperturbed unknown image $x_\true$ (reshaped as a vector). To mitigate the complexity of problem (\ref{eq:sepnonlinsys}) one can take advantage of separability, as the objective function in (\ref{eq:sepnonlinsys}) is linear in 
$x$. For instance, the variable projection method \cite{GolubVP} applied to (\ref{eq:sepnonlinsys}) implicitly eliminates $x$, obtaining a reduced cost functional that depends only on $y$ and that is optimized using a nonlinear LS solver. Note that, 
in a large-scale setting, to recover $x$ one should still employ an iterative linear LS solver; i.e., one should adopt the strategies mentioned above for problems (\ref{eq:linsys}), (\ref{tikhx0}). 
In particular, \cite{JulianneJim} proposes a very efficient inner-outer iteration scheme that, while computing $x$ for a fixed $y$ using a LSQR-based hybrid method, determines a suitable regularization parameter $\lambda$ on the fly; $y$ is updated using a Gauss–Newton method. 

The main goals of this paper are to introduce inexact Krylov methods for regularizing inverse problems, and to apply them to 
compute a solution of 
(\ref{eq:sepnonlinsys}), (\ref{eqtikhy}). Indeed, our investigation is prompted by the fact that, when exploiting the separability of the objective function in (\ref{eq:sepnonlinsys}), (\ref{eqtikhy}) and applying a traditional Krylov method to recover $x\in\R^n$ for a fixed $y\neq y_{\true}\in\R^p$, one is actually 
using an inexact Krylov method, in that one is generating the approximation subspace for 
$x$ by performing matrix-vector products with inexact $A(y)$ and $A^T(y)$ rather than the exact $A(y_\true)$ and $A^T(y_\true)$, respectively. 

Starting from a reformulation of the so-called inexact Golub-Kahan decomposition \cite{inexactGKB}, we derive two new inexact Krylov solvers for LS problems (\ref{eq:linsys}) 
that can be regarded as the inexact counterparts of LSQR and CGLS, respectively; also, we study bounds for the amount of inexactness, 
so that exact and inexact solvers have comparable performances. We then explain how inexact LSQR and CGLS can be used in a hybrid fashion, i.e., to solve a Tikhonov-regularized LS problem (\ref{tikhx0}). Exact hybrid methods are underpinned by a solid theory guaranteeing that first 
regularizing and then projecting (\ref{tikhx0}) 
is equivalent to first projecting (\ref{eq:linsys}) and then 
regularizing; moreover, an adaptive choice of the regularization parameter is possible thanks to the shift invariance of Krylov subspaces. When deriving inexact hybrid methods, we study conditions under which the new solvers keep enjoying such properties. Finally, focussing on the variable projection method applied to (\ref{eq:sepnonlinsys}), (\ref{eqtikhy}), we explain how the new inexact Krylov methods potentially allow updates of the parameters $y$ as soon as a new approximation of $x$ is available, i.e., at each iteration of the inexact linear solver, rather than after a suitable number of exact iterations 
(as prescribed by the inner-outer schemes mentioned above). As far as specific conditions bounding the amount of inexactness are satisfied, this strategy will result in great computational savings. 

Although we are aware of some research 
about the use of an inexact $A^T$ in iterative regularization methods for 
linear inverse problems in 
image deblurring \cite{Donatelli2006} and computed tomography \cite{Elfving}, 
to the best of our knowledge the use of inexact Krylov methods for regularization and, in particular, for solving separable nonlinear inverse problems (\ref{eq:sepnonlinsys}), (\ref{eqtikhy}), is novel. 
Inexact Krylov methods are however 
ubiquitous in a number of numerical linear algebra tasks, typically involving well-conditioned matrices: 
we refer to \cite{inexact1} for possible applications and a literature review, as well as for a comprehensive theoretical treatment and understanding of inexact Krylov methods. 
Even if the investigations in \cite{inexact1} may appear similar to some of the ones proposed here, our approach is significantly different in that: 
(i) we 
consider solvers based on the inexact Golub-Kahan 
decomposition rather than on the inexact Arnoldi algorithm; 
(ii) since $A$ and $A(y)$ are ill-conditioned, assuring that the exact and the inexact residuals are close 
is not enough to guarantee that the exact and inexact solutions are close, and alternative conditions should be derived; 
(iii) when considering (\ref{eq:sepnonlinsys}), (\ref{eqtikhy}), the exact parameters $y_\true$ are unknown and, as a consequence, the amount of inexactness in applying $A(y)$ and $A^T(y)$ is unknown and some heuristics should be adopted to monitor it. Moreover, although \cite{inexactbis} already considers inexact versions of the CG method based on either three-term recurrences or an inexact version of the Lanczos process, the new inexact CGLS method cannot be regarded as a straightforward generalization of inexact CG because of the challenges in handling inexactness in both $A$ and $A^T$. 

This paper is organized as follows. Section \ref{sec:exact} reviews some background material about Krylov methods based on Golub-Kahan bidiagonalization, applied to (\ref{eq:linsys}) or to (\ref{tikhx0}) in a hybrid fashion. Section \ref{sec:inexact} introduces the new inexact solvers based on the inexact Golub-Kahan decomposition, and develops some theory (including bounds for the amount of inexactness) that relates them to their exact counterparts. 
Section \ref{sec:blind} describes how the new inexact methods can be applied to the blind deblurring problem, including computable strategies to define and bound inexactness; the results of some numerical tests are also displayed. Finally, Section \ref{sec:end} outlines some concluding remarks.

\emph{Notations}. In the following, matrices are denoted by uppercase letters and the $(i,j)$th entry of a matrix $G$ is denoted by $[G]_{i,j}$. The letter $I$ denotes the identity matrix, whose size should be clear from the context; $e_j$ is the $j$th canonical basis vector (i.e., the $j$th column of $I$). $\mcR(G)$ denotes the range (column space) of the matrix $G$. Vectors are denoted by lowercase letters and the $i$th entry of a vector $g$ is denoted by $[g]_i$. The letter $r$ is routinely used to denote residual vectors.

%

\section{Combining Krylov methods and Tikhonov regularization}\label{sec:exact}

In this section we assume that there is no inexactness. 
We first recall the Golub-Kahan (GK) algorithm and its relations with the symmetric Lanczos algorithm, and we then explain how both algorithms can be employed to approximate the solution of problems (\ref{eq:linsys}) and (\ref{tikhx0}). 

Starting from
\begin{equation} \label{eq:Lanczos0}
u_1 = r_0/\|r_0\|=r_0/\beta,\qquad \|v\|v=A^Tu_1\,,\; v_1 = v/\|v\|=v/\alpha_1\,,
\end{equation}
the $i$th GK iteration 
computes
\begin{equation}\label{LaczosVect}
\beta_{i+1}u_{i+1} =Av_i - \alpha_iu_i\,,\qquad \alpha_{i+1} v_{i+1} = A^Tu_{i+1} -\beta_{i+1} v_{i},  \quad i =1,2,\dots,n-1,
\end{equation}
with $\alpha_{i+1}$ and $\beta_{i+1}$ chosen so that $\|v_{i+1}\|=1$ and $\|u_{i+1}\|=1$, respectively. In the following we assume that $\alpha_{i+1}\neq 0$ or $\beta_{i+1}\neq 0$, $i=1,\dots,n-1$  (i.e., GK does not break down), so that relations (\ref{LaczosVect}) are always well-defined; we also note that, in practice, GK applied to (\ref{eq:linsys}) is feasible only if $i\ll n$ iterations are computed. After $k$ GK iterations are performed, one can write partial matrix factorizations of the form
\begin{eqnarray} \label{eq:Lanczos1}
A V_k 
=U_{k+1} \bB_k \,, 
\qquad A^T U_{k+1} = V_{k+1} B_{k+1}^T\,,
\end{eqnarray} 
where $V_{k+1}=[v_1,\dots,v_k,v_{k+1}] \in \mathbb{R}^{n \times (k+1)}$ and $U_{k+1}=
[u_1,\dots,u_{k+1}] \in \mathbb{R}^{m \times (k+1)}$, 
are matrices whose orthonormal columns span the Krylov subspaces \linebreak[4]$\mathcal{K}_{k+1}(A^{T} A,A^{T} r_0)$ and $\mathcal{K}_{k+1}(A A^{T}, r_0)$, respectively; 
$B_{k+1}\in \mathbb{R}^{(k+1) \times (k+1)}$ is the lower bidiagonal matrix having 
$[B_k]_{i,i}=\alpha_i$, $i=1,\dots,k+1$ and $[B_k]_{i+1,i}=\beta_{i+1}$, \linebreak[4]$i=1,\dots,k$; $\bB_k\in \mathbb{R}^{(k+1) \times k}$ is obtained by removing the last column of $B_{k+1}$. 

The symmetric Lanczos \cite[Chapter 6]{Saad2003} and the GK algorithms are closely related: indeed,  
multiplying the first expression in (\ref{eq:Lanczos1}) from the left by $A^T$, and using again the second equation in (\ref{eq:Lanczos1}), one obtains
\begin{equation}
\underbrace{A^T A}_{=:\whA} V_k = A^T U_{k+1} \bar{B}_{k} = V_{k+1} \underbrace{B_{k+1}^T \bar{B}_{k}}_{=:\bar{T}_k} = V_{k} \underbrace{\bar{B}_{k}^T \bar{B}_{k}}_{=:{T}_k}+\alpha_{k+1} \beta_{k+1} v_{k+1}e^T_{k},\label{SymLanczos2}
\vspace{-0.2cm}
\end{equation}
so that $V_k$ can be regarded as the matrix generated by performing $k$ steps of the symmetric Lanczos algorithm applied to $A^T A$, with initial vector $A^T r_0$. We remark that the symmetric Lanczos algorithm is mathematically equivalent to the Arnoldi algorithm, the symmetric tridiagonal matrix ${T}_k$ being linked to the fact that $A^TA$ is symmetric. 

LSQR is an iterative solver for (\ref{eq:linsys}) based on GK bidiagonalization (\ref{eq:Lanczos1}). The $k$th iteration of LSQR computes
\begin{equation}\label{eq:LSQR1}
s_k = \arg\min_{s\in \R^k}\|\bB_k s - \beta e_1\|\,,\quad\mbox{i.e.,}\quad (\bB_k^T\bB_k)s_k = \bB_k^T(\beta e_1)\,,
\end{equation}
and takes $x_k=x_0+z_k=x_0+V_ks_k$. Equivalently, using (\ref{eq:Lanczos0}) and (\ref{eq:Lanczos0}), LSQR computes 
\begin{equation}\label{eq:LSQR2}
q_k=\arg\min_{q\in \mcR(U_{k+1}\bB_k)=\mcR(AV_{k})}\|q - r_0\|\,.
\end{equation}
In other words, the $k$th iteration of LSQR projects the solution of (\ref{eq:linsys}) onto $\mcR(V_k)$ orthogonally to $\mcR(AV_k)$. LSQR is also mathematically equivalent to CGLS, i.e., CG 
method applied to the normal equations associated to (\ref{eq:linsys}). Namely, 
(\ref{eq:LSQR1}) can be also expressed as\vspace{-0.4cm}
\begin{equation}\label{eq:cg}
\overbrace{V_k^T(A^TA)V_k}^{=\bB_k^T\bB_k} s_k\quad
=\overbrace{V_k^TA^Tr_0}^{\bB_k^TU_{k+1}^Tr_0=\bB_k^T\|r_0\|e_1}\,,
\end{equation}
where equation (\ref{SymLanczos2}) has been used to reformulate the leftmost quantity, and relations (\ref{eq:Lanczos0}) and (\ref{eq:Lanczos1}) have been used to get the rightmost equalities. This essentially means that CGLS computes \vspace{-0.4cm}
\begin{equation}\label{eq:cg2}
q_k\in\mcR(A^TAV_k)=\mcR(V_{k+1}\bar{T}_k)\quad\mbox{such that}\quad \overbrace{A^Tr_0}^{=:\whr_0} - q_k \perp \mcR(V_k)\,,
\end{equation}
i.e., CGLS projects the solution of (\ref{eq:linsys}) onto $\mcR(V_k)$ orthogonally to $\mcR(V_k)$. 

LSQR and CGLS can be adopted in a hybrid fashion to solve (\ref{tikhx0}), too. 
In this setting, assuming for now that $\lambda$ is fixed, 
we are faced with many equivalences 
that involve the interplay of regularization and projection, and leverage properties of Krylov basis vectors (such as orthonormality or shift invariance). 
Despite the specific formulation, the $k$th iteration of all the methods computes 
\begin{equation}\label{xapprox}
x_{\true}\approx x_{\lambda,k}=x_0+V_ks_{\lambda,k}\in\Kry_k(A^TA, A^Tr_0)=\Kry_k(A^TA+\lambda^2I, A^Tr_0)\,.
\end{equation}
%
Starting from (\ref{tikhx0}), and exploiting (\ref{eq:Lanczos0}) and (\ref{eq:Lanczos1}), 
we can write
\begin{equation}
s_{\lambda,k}= 
\arg\min_{s\in\R^k}
\left\|
AV_k s - r_0\right\|^2 + 
\lambda^2  \left\| V_ks\right\|^2
=
\arg\min_{s\in\R^k}
\left\|
\bB_k s - \beta e_1\right\|^2 + 
\lambda^2  \left\| s\right\|^2\,.\label{eq:projregpb}
\end{equation}
Alternatively, starting from the reformulation of (\ref{tikhx0}) as an \emph{augmented LS problem}, and exploiting similar properties, 
we can write
\begin{equation}\label{eq:projregaugpb}
s_{\lambda,k} = 
\arg\min_{s\in\R^k}\:
\left\|\left[\begin{array}{c}
A \\
\lambda I
\end{array}
\right]V_k s-
\left[\begin{array}{c}
r_0 \\
0
\end{array}
\right]
\right\|^2 
= \arg\min_{s\in\R^k}\:
\left\|
\left[
\begin{array}{c}
\bB_k \\
\lambda I
\end{array}
\right] s -
\left[\begin{array}{c}
\beta e_1 \\
0
\end{array}
\right]
\right\|^2\,.
\end{equation}
Obviously (\ref{eq:projregpb}) and (\ref{eq:projregaugpb}) are equivalent and, in particular, the leftmost quantities in both equations can be regarded as Tikhonov-regularized versions 
of the projected LS problem (\ref{eq:LSQR1}) solved by LSQR. Therefore, first regularizing (considering a penalized or augmented LS problem) and then projecting is equivalent to first projecting and then regularizing (considering a penalized or augmented LS problem, respectively). Note that, since $V_k$ is generated with respect to $A$ and $r_0$ (i.e., it is independent of $\lambda$), one can potentially change $\lambda$ at each GK iteration so that, if $\lambda=\lambda_k$ at the $k$th iteration 
of the hybrid methods (\ref{eq:projregpb}) and (\ref{eq:projregaugpb}), 
an instance of problems (\ref{tikhx0}) 
with $\lambda=\lambda_k$ is approximated. This proves strategic in case a suitable value of $\lambda$ is not known a priori (see \cite{chungframe}), and to adapt the amount of regularization to the $k$th projected problem. 
%
Finally, we can consider the normal equations formulation associated to the Tikhonov augmented LS problem 
and, similarly to (\ref{eq:cg}), we compute
\begin{eqnarray}
s_{\lambda,k} &=& \left(V_k^T(A^TA + \lambda^2 I)V_k\right)^{-1}V_k^T(A^Tr_0) \label{eq:CGderivation}\\
&=& ({T}_k + \lambda^2 I)^{-1}\|A^Tr_0\|e_1= \left(\bB_k^T\bB_k + \lambda^2 I\right)^{-1}\bB_k^T(\beta e_1)\,,\nonumber
\end{eqnarray}
where we have exploited (\ref{eq:Lanczos0}) and (\ref{SymLanczos2}). 
Note that $s\lamk$ in (\ref{eq:CGderivation}) expresses the solution to the normal equations associated to (\ref{eq:projregaugpb}), so that (\ref{eq:CGderivation}) is equivalent to (\ref{eq:projregpb}) and (\ref{eq:projregaugpb}). Also in this case, by exploiting the shift-invariance of Krylov subspaces (\ref{xapprox}), one can see that applying CG to the shifted (regularized) normal equations is equivalent to shifting (regularizing) the projected normal equations, 
implying that $\lambda$ can be adaptively set during the iterations. 
\section{Combining inexact Krylov methods and Tikhonov regularization}\label{sec:inexact}
In this section we first present an inexact Golub-Kahan decomposition, and we show how it can be employed to solve linear systems of the form (\ref{eq:linsys}). 
We then derive a couple of strategies to combine the inexact Golub-Kahan decomposition and Tikhonov regularization. Unfortunately, only a few of the equivalences presented in Section \ref{sec:exact} for the exact case extend to the inexact case. 

\subsection{Inexact Golub-Kahan (iGK) decomposition}\label{Section:iGKB}
Assume that the actions of $A$ and $A^T$ are just approximately available. Then the solution of linear systems of the form (\ref{eq:linsys}) can be efficiently approximated using methods based on the inexact Golub-Kahan (iGK) algorithm that, 
at the $i$th iteration, 
only uses the available actions of $(A+E_i)$ and $(A+F_i)^T$. 
Starting from
\begin{equation}\label{eq:inex_gkb_0}
u_1 = r_0/\|r_0\|=r_0/\beta,\quad \|v\|v=(A+F_1)^Tu_1\,,\quad v_1 = v/\|v\|=v/[L]_{1,1}\,,
\end{equation}
the $i$th iteration of the inexact Golub-Kahan (iGK) algorithm computes
\begin{equation}\label{eq:iGKDvect}
\begin{array}{lll}
u = (A+E_{i})v_i, & u = (I - U_iU_i^T)u, & u_{i+1}=u/\|u\|\,,\\
v = (A+F_{i+1})^Tu_{i+1}, & v = (I - V_iV_i^T)v, & v_{i+1}=v/\|v\|\,,
\end{array}
\end{equation}
where $U_i=[u_1,\dots,u_i]\in\R^{m\times i}$, $V_i=[v_1,\dots,v_i]\in\R^{n\times i}$ are matrices with orthonormal columns. Note that, if $x_0\neq 0$, the starting vector $r_0$ in (\ref{eq:inex_gkb_0}) may already be affected by some inexactness, as $r_0=b-(A+E_0)x_0$; by committing a slight abuse of notation, here and in the following we will still denote such quantity by $r_0$, even if it may be different from the one appearing in (\ref{tikhx0}), (\ref{eqtikhy}), and (\ref{eq:Lanczos0}). 
After $k$ iGK iterations are performed, one can write partial matrix factorizations of the form 
\begin{equation}\label{eq:inex_gkb_1}
\begin{array}{rcl}
\left[ (A+E_1)v_1,..., (A+E_{k})v_k \right] & = & U_{k+1} M_k \,, \\
\left[ (A+F_1)^T u_1,..., (A+F_{k+1})^T u_{k+1} \right] & = & V_{k+1} L_{k+1}^T\,,
\end{array}
%
\end{equation}
where 
$M_k\in\R^{(k+1)\times k}$ is upper Hessenberg with $[M_k]_{j,i}=u_j^Tu$ and $[M_k]_{i+1,i}=\|u\|$ for $j\leq i\leq k$, and $L_{k+1} \in \R^{(k+1) \times (k+1)}$ is lower triangular with $[L_{k+1}]_{i+1,j}=v_j^Tv$ and $[L_{k+1}]_{i+1,i+1}=\|v\|$ for  $j\leq i\leq k$. Note that, to impose orthogonality, $M_k$ and $L_{k+1}$ have to be considered rather than the simpler bidiagonal $B_k$ (and its variant) as in (\ref{eq:Lanczos1}). 
The above relations can be re-written in the following even more compact form
\begin{equation}\label{eq:inex_gkb_2}
\begin{array}{lcl}
(A+\mathcal{E}_k)V_k &=& U_{k+1} M_k \\
(A+{\mathcal{F}}_{k+1})^T U_{k+1} & = & V_{k+1} L_{k+1}^T
\end{array},
\quad\mbox{where}\quad
\begin{array}{lcl}
\mathcal{E}_k &=&\sum_{i=1}^k E_i v_i v_i^T\\
\mathcal{F}_{k+1}&=&\sum_{i=1}^{k+1} F_i u_iu_i^T 
\end{array}.
\end{equation}
Such partial decompositions involving the matrices $A$ and $A^T$ were first introduced in \cite{inexactGKB} in the framework of matrix function computations; also, the iGK 
decomposition bears similarities to the flexible Golub-Kahan decomposition introduced in \cite{JulianneSilvia}.

Similarly to relation (\ref{SymLanczos2}) in the exact case, one can use (\ref{eq:inex_gkb_2}) to devise an inexact relation involving the matrix $A^TA$ by considering 
\begin{eqnarray}
(\overbrace{A^TA}^{=\widehat{A}} +\overbrace{{\mathcal{F}}_{k+1}^TA + A^T \mathcal{E}_{k} + {\mathcal{F}}_{k+1}^T\mathcal{E}_{k}}^{=:\widehat{\mathcal{E}}_{k}})V_k &=& (A+{\mathcal{F}}_{k+1})^T (A+\mathcal{E}_{k})V_k \nonumber\\
&=&(A+{\mathcal{F}}_{k+1})^TU_{k+1}M_k
=V_{k+1}\overbrace{L_{k+1}^TM_k}^{=:\widehat{H}_k} \label{eq:iLanczos}\\
&=&V_k \bar{L}_k^T M_k + [\widehat{H}_k]_{k+1,k}v_{k+1} e_k^T,\nonumber
\end{eqnarray}
where $\bar{L}_k\in\R^{(k+1)\times k}$ is the matrix obtained by removing the last column of $L_{k+1}$. With respect to the exact case (\ref{SymLanczos2}), we note that, in the fourth and fifth of the above equalities, an upper Hessenberg matrix $\whH_k\in\R^{(k+1)\times k}$ appears (instead of a symmetric tridiagonal matrix $T_k$). This is a consequence of the fact that $(\widehat{A}+{\widehat{\mathcal{E}}_{k}})$ is (in general) non symmetric (unless the matrices $A$, $\mathcal{E}_{k}$, $\mathcal{F}_{k+1}$ all commute). 
Moreover, while (\ref{SymLanczos2}) was a particular case of the 
Arnoldi decomposition (applied to $\whA$), 
the inexact Lanczos (iLanczos) decomposition (\ref{eq:iLanczos}) cannot generally  
be regarded as a particular case of an inexact Arnoldi (iArnoldi) decomposition (associated to $\whA$). 
Indeed, starting from $v_1^{iA}=v_1$ as in (\ref{eq:inex_gkb_0}), the $i$th iArnoldi iteration computes
\[
v = (A+F_{i+1})^T(A+E_{i})v_i^{iA}, \quad v = (I - V_i^{iA}(V_i^{iA})^T)v, \quad v_{i+1}^{iA}=v/\|v\|\,,
\]
where $V_i^{iA}=[v_1^{iA},\dots,v_i^{iA}]\in\R^{n\times i}$ has orthonormal columns. After $k$ iArnoldi iterations are performed, one can write a partial matrix factorization of the form 
\begin{equation}\label{eq:inex_arnoldi}
\left[ (A+F_2)^T(A+E_1)v_1^{iA},..., (A+F_{k+1})^T(A+E_{k})v_k^{iA} \right] =  V_{k+1}^{iA} H_k^{iA}\,, 
\end{equation}
where $H_k^{iA}\in\R^{(k+1)\times k}$ is upper Hessenberg with $[H_k^{iA}]_{j,i}=v_j^Tv$ and $[H_k^{iA}]_{i+1,i}=\|v\|$ for $j\leq i\leq k$. More compactly,
\begin{equation}\label{eq:inex_arnoldi_2}
(\whA+\widehat{\mathcal{E}}_k^{iA})V_k^{iA}= V_{k+1}^{iA} H_k^{iA},
\;\mbox{where}\;
\widehat{\mathcal{E}}_k^{iA} =\sum_{i=1}^k (A^TE_i + F_{i+1}^TA + F_{i+1}^TE_i)v_i^{iA} (v_i^{iA})^T.
\end{equation}
By comparing (\ref{eq:iLanczos}) and (\ref{eq:inex_arnoldi_2}), 
one can see that, for $j=1,\dots,k$, iLanczos computes
\begin{equation}\label{1stepiLanczos}
A^TAv_j + \underbrace{A^TE_jv_j + \mcF_{k+1}U_{k+1}M_ke_j}_{=A^TE_jv_j+\sum_{i=1}^{j+1}[M_k]_{i,j}F_i^Tu_i=:\,\widehat{e}_j} 
= \sum_{i=1}^{j+1}[\whH_k]_{j,i}v_i\,,
\end{equation}
while iArnoldi computes
\[
A^TAv_j^{iA} + A^TE_jv_j^{iA} + F_{j+1}^TAv_j^{iA} + F_{j+1}^TE_jv_j^{iA}=\sum_{i=1}^{j+1}[H_k^{iA}]_{j,i}v_i^{iA}\,.
\]
It is clear that the two expressions above are generally different. They however coincide in specific instances, e.g., when $F_{j+1}=0$, $j=1,\dots,k$, i.e., when the matrix-vector products with $A^T$ are computed exactly. 
%

\subsection{Linear solvers based on the iGK decomposition} 
We define the inexact LSQR (iLSQR) method to be an iterative solver for (\ref{eq:linsys}) that, at the
$k$th iteration, computes
\begin{equation}\label{eq:iLSQR2}
q_k=\arg\min_{q\in \mcR(U_{k+1}M_k)}\|q - r_0\|\,.
\end{equation}
Thanks to the orthonormality of the columns of $U_{k+1}$ and (\ref{eq:inex_gkb_2}), the equalities
\[
\|M_k s - \beta e_1\|=\|U_{k+1}M_k s - r_0\|=\|(A+\mcE_k)V_k s - r_0\|
\]
hold, so that iLSQR equivalently computes 
\begin{equation}\label{eq:iLSQR1}
s_k = \arg\min_{s\in \R^k}\|M_k s - \beta e_1\|\,,\quad\mbox{i.e.,}\quad (M_k^TM_k)s_k = M_k^T(\beta e_1)\,,
\end{equation}
and takes $x_k=x_0+z_k=x_0+V_ks_k$. Although (\ref{eq:iLSQR1}) and (\ref{eq:iLSQR2}) are formally equivalent to (\ref{eq:LSQR1}) and (\ref{eq:LSQR2}), respectively (i.e., the analogous relations written for LSQR), because of the presence of $\mathcal{E}_k$ in the first equation in (\ref{eq:inex_gkb_2}), the $k$th iLSQR iteration does not minimize the exact residual $\|r_0-Az\|$ among the vectors $z\in\mcR(V_k)$, and $\mcR(V_k)$ is not a Krylov subspace anymore. Note that this is analogous to what happens in the case of GMRES and inexact GMRES; see \cite{inexact1} for more details. 

We define the inexact CGLS (iCGLS) method to be an iterative solver for (\ref{eq:linsys}) that, at the $k$th iteration, computes
\begin{equation}\label{eq:icg0}
q_k\in\mcR(V_{k+1}\whH_{k})\quad\mbox{such that}\quad (A+{\mathcal{F}}_{k+1})^Tr_0 - q_k 
\perp \mcR(V_k)\,.
\end{equation}
Note that, contrarily to exact case (\ref{eq:cg2}), iCGLS does not impose an orthogonality condition on the exact normal equation residual $A^Tr_k$, and additional care should be taken because $(A+{\mathcal{F}}_{k+1})^Tr_0=(A+F_1)^Tr_0$ is also potentially affected by some errors. 
Equivalently, instead of imposing (\ref{eq:icg0}), one may impose 
\begin{equation}\label{eq:icg1}
V_k^T(\whA+\whmcE_k)V_k s_k = V_k^T(A+{\mathcal{F}}_{k+1})^Tr_0\,,
\end{equation}
directly, so that iCGLS solves
\begin{equation}\label{eq:icg}
\bar{L}_k^T M_k s_k = [\bar{L}_k]_{1,1}\beta e_1
\end{equation}
and takes $x_k=x_0+z_k=x_0+V_ks_k$. 

We must stress that 
iLSQR is not equivalent to iCGLS anymore 
(this is evident comparing equations (\ref{eq:iLSQR1}) and (\ref{eq:icg})); also, differently to the exact case, $M^T_k \beta e_1 \neq \|A^T r_0\| e_1$. However, the $k$th iteration of both iLSQR and iCGLS computes
\begin{equation}\label{ixapprox}
x_{k}=x_0+V_ks_{k}\,,\quad\mbox{where $V_k$ is defined in (\ref{eq:inex_gkb_2}) or (\ref{eq:iLanczos}).}
\end{equation} 

In the following we will introduce two different strategies to combine Tikhonov regularization and inexact Krylov methods, based on iLSQR and iCGLS, respectively. 
\subsection{A hybrid method based on iLSQR}\label{sec:hybridiLSQR}
A first way of combining iGK and Tikhonov regularization consists in applying the iLSQR condition (\ref{eq:iLSQR2}) 
to the 
augmented LS problem formulation of Tikhonov regularization (\ref{tikhx0}) assuming, for the moment, that $\lambda$ is fixed. Namely, 
we extend relations (\ref{eq:projregaugpb}) to the inexact case by computing, at the $k$th 
iteration,
\begin{eqnarray}\label{eq:iprojregaugpb}
q_{\lambda,k}=\arg\min_{q \in \mathcal{R}(W_{\lambda,k})} \left\|
q-
\left[\begin{array}{c}
r_0 \\
0
\end{array}
\right]
\right\|, \quad \text{where} \quad W_{\lambda,k} =\left[\begin{array}{c}
U_{k+1} M_k \\
\lambda V_k
\end{array} \right].
\end{eqnarray}
Exploiting the relation
\begin{equation}\label{iGKBaug}
\left[\begin{array}{c}
A + \mcE_k\\
\lambda I
\end{array}
\right]V_k = 
\left[\begin{array}{cc}
U_{k+1} & 0 \\
0 & V_k
\end{array}
\right]
\left[\begin{array}{c}
M_{k} \\
\lambda I
\end{array}
\right]=
\left[\begin{array}{c}
U_{k+1}M_{k} \\
\lambda V_k
\end{array}
\right]\,,
\end{equation}
which is a trivial extension of (\ref{eq:inex_gkb_2}), and recalling the definition of $u_1$ in (\ref{eq:inex_gkb_0}), it is easy to see that solving problem (\ref{eq:iprojregaugpb}) is equivalent to computing $q_{\lambda,k}=W_{\lambda,k}s_{\lambda,k}$, where
\begin{eqnarray}
s_{\lambda,k} &=& 
\arg \min_{s\in\R^k} \left\|
\left[\begin{array}{c}
M_k \\
\lambda I
\end{array} \right] s -
\left[\begin{array}{c}
\beta e_1 \\
0
\end{array}
\right] 
\right\|^2
\!\!\!\!= \arg \min_{s\in\R^k} \|M_ks - \beta e_1\|^2 + \lambda^2 \|s\|^2 \label{eq:iprojregaugpb_1}\\ 
&=& (M^T_k M_k+ \lambda^2 I)^{-1} M^T_k (\beta e_1).\label{eq:iprojtikhne}
\end{eqnarray}
In the following we refer to method in (\ref{eq:iprojregaugpb}) or (\ref{eq:iprojregaugpb_1})-(\ref{eq:iprojtikhne}) as hybrid-iLSQR. Looking at the above equations, it is evident that first regularizing and then projecting (i.e., the path that we just followed) is equivalent to first projecting and then regularizing. Indeed, the same problem (\ref{eq:iprojregaugpb_1}) may be 
obtained by first applying iLSQR to (\ref{eq:linsys}) and then regularizing the projected LS problem (\ref{eq:iLSQR1}) (using the augmented LS formulation of Tikhonov regularization). Note that, taking inexactness into account and using relations (\ref{eq:inex_gkb_2}) and (\ref{iGKBaug}), one can link (\ref{eq:iprojregaugpb_1}) to the full-dimensional problem 
\begin{eqnarray}
\label{opthiLSQR}
s\lamk = 
\arg \min_{s \in \mathbb{R}^k} \left\|
\left[\begin{array}{c}
A + \mcE_k\\
\lambda I
\end{array}
\right]V_ks -  
\left[\begin{array}{c}
r_0 \\
0
\end{array}
\right]\right\|^2 .
\end{eqnarray}
Looking at the above formulation it is evident that the optimality properties of hybrid LSQR (\ref{eq:projregpb}) and hybrid-iLSQR are different, as the functional minimized by the latter is an error-corrupted version of the augmented-LS Tikhonov functional. 
\subsection{A hybrid method based on iCGLS}\label{sec:hybridiCGLS}
A second way of combining iGK and Tikhonov regularization consists in 
extending condition (\ref{eq:icg0}) to the normal equations associated to the 
augmented LS problem formulation of Tikhonov regularization (\ref{tikhx0}) assuming, for the moment, that $\lambda$ is fixed. Namely, at the $k$th iteration, we impose 
\begin{equation}\label{eq:iCGderivation1}
q\lamk \in \mathcal{R}(W\lamk)=\mcR(V_{k+1}(\widehat{H}_{k} + \lambda^2\bar{I}))\,,\quad
(A+\mcF_{k+1})^Tr_0 - q\lamk  \perp \mathcal{R}(V_k)\,,
\end{equation}
where $\bar{I}\in\R^{(k+1)\times k}$ denotes the identity matrix of order $(k+1)$ without its last column. Equivalently, we can apply 
the iCGLS condition (\ref{eq:icg1}) 
to the normal equations associated to the 
augmented LS problem formulation of Tikhonov regularization (\ref{tikhx0}) assuming, for the moment, that $\lambda$ is fixed. That is, 
we compute
\begin{eqnarray}
s_{\lambda,k} &=& \left(V_k^T(\whA + \whmcE_k + \lambda^2 I)V_k\right)^{-1}V_k^T(A+\mcF_{k+1})^Tr_0 \label{eq:iCGderivation}\\
&=& (\bL_k^TM_k + \lambda^2 I)^{-1}[\bL_k]_{1,1}\beta e_1\,,\nonumber
\end{eqnarray}
and then take $x\lamk=x_0+V_ks\lamk$. In the following we refer to the method in (\ref{eq:iCGderivation1}) or (\ref{eq:iCGderivation}) as hybrid-iCGLS method. 
Looking at equation (\ref{eq:iCGderivation}), it is evident that applying iCGLS to the shifted normal equations (i.e., the path that we just followed) is equivalent to shifting the projected normal equations (so that, similarly to hybrid LSQR, regularization and projection are interchangeable). However note that, differently from hybrid LSQR (\ref{eq:CGderivation}), hybrid-iCGLS projects the error-corrupted normal equations. 
%

\subsection{Remarks about iLSQR, iCGLS, and their hybrid counterparts} \label{sec: theory}
As already mentioned in the previous sections, one of the upsides of using hybrid methods consists in the fact that they allow efficient and adaptive regularization parameter choice on the fly. This is essentially linked to the shift-invariance property of the approximation subspace for the solution, which is the same for LSQR-based and CGLS-based methods; see (\ref{xapprox}). Before 
considering possible parameter choice strategies for hybrid methods based on iGK, it is therefore natural to assess if shift invariance is still enjoyed by the inexact approximation subspace for the solution; see (\ref{ixapprox}). 
%
In the iCGLS case we can state the following.
\begin{proposition}\label{prop:spacei}
Assume that $k$ iLanczos iterations (\ref{eq:iLanczos}) have been performed, so that the partial decomposition (\ref{eq:iLanczos}) can be written. Assume that $\{\lambda_i\}_{i=1,\dots,k}\subset \R^+_0$ is such that $\lambda_i\neq\lambda_j$ if $i\neq j$. If $\whmcE_k$ is independent of $\lambda_i$ (or, equivalently, both $\mcE_k$ and $\mcF_{k+1}$ are independent of $\lambda_i$), $i=1,\dots,k$, then 
the subspace $\mcR(V_k)$ is shift-invariant. Moreover, if $\mcF_{k+1}=0$ and 
\begin{equation}\label{eq:shiftinvdef}
\whmcE_{i,\tilde{\Lambda}_k} =\sum_{j=1}^i(E_j+ \tilde{\lambda}_j^2I)v_jv_j^T=
\mcE_i + V_i\tilde{\Lambda}_i^2 V_i^T,\quad
\tilde{\Lambda}_i=\mbox{\emph{diag}}(\tilde{\lambda}_1,\dots,\tilde{\lambda}_i)\,,
\end{equation}
then 
the subspace $\mcR(V_k)$ is shift-invariant.
\end{proposition}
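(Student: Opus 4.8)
The plan is to reduce the two-sided iGK process to a one-sided recurrence for the vectors $v_i$, to track exactly how a shift enters each step, and then to argue by induction on $i$. From the iGK relations (\ref{eq:iGKDvect}), $v_{i+1}$ is produced by applying the (step-dependent, inexact) operator to $v_i$, orthogonalizing the result against $\mcR(V_i)$ via $(I-V_iV_i^T)$, and normalizing; at the normal-equations level this is precisely the step-wise iLanczos identity (\ref{1stepiLanczos}). The base vector $v_1=r_0/\beta$ in (\ref{eq:inex_gkb_0}) carries no shift, so it is common to every shift choice. The guiding principle throughout is the classical absorption mechanism: if a shift alters the action of the step-$i$ operator on $v_i$ only by a vector lying in $\mcR(V_i)$, then $(I-V_iV_i^T)$ annihilates that alteration and $v_{i+1}$ is left unchanged, so inducting on $i$ shows that the whole basis, and hence $\mcR(V_k)$, is independent of the shift. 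This is exactly how I would dispatch the first assertion. Granting inductively that $v_1,\dots,v_i$ — and therefore $\whmcE_i$, which by hypothesis does not depend on $\lambda_1,\dots,\lambda_k$ — coincide with those of the unshifted process, the step-$i$ operator $(\whA+\whmcE_i)+\lambda_i^2 I$ acts on $v_i$ as the unshifted operator does plus the term $\lambda_i^2 v_i\in\mcR(V_i)$, which is removed by $(I-V_iV_i^T)$; hence $v_{i+1}$ is unchanged and the induction proceeds. The distinctness of the $\lambda_i$ plays no role in the absorption itself; it only serves to phrase shift-invariance across the full family of per-iteration shifts.

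For the final (``Moreover'') statement the difficulty is that the inexactness $\whmcE_{i,\tilde{\Lambda}}=\mcE_i+V_i\tilde{\Lambda}_i^2V_i^T$ now genuinely depends on the shifts $\tilde{\lambda}_j$, so the first assertion does not apply verbatim. The computation I would carry out is to show that the shift-dependent term acts on the current vector as a pure multiple of it: by orthonormality of the columns of $V_i$,
\[
V_i\tilde{\Lambda}_i^2V_i^T v_i=\sum_{j=1}^i\tilde{\lambda}_j^2\,v_j(v_j^Tv_i)=\tilde{\lambda}_i^2\,v_i\in\mcR(V_i).
\]
Hence, exactly as in the absorption principle above, the only shift-carrying contribution to the step-$i$ action on $v_i$ lies in $\mcR(V_i)$ and is killed by $(I-V_iV_i^T)$; the same is true of any additional global Tikhonov shift $\lambda^2 v_i$. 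It follows that the orthogonalized-and-normalized $v_{i+1}$ depends neither on $\tilde{\lambda}_1,\dots,\tilde{\lambda}_i$ nor on $\lambda$, the induction closes, and $\mcR(V_k)$ is shift-invariant, which is what justifies varying $\lambda$ adaptively in (\ref{eq:iCGderivation}) as anticipated in (\ref{xapprox}). The hypothesis $\mcF_{k+1}=0$ is what keeps the argument one-sided: with exact adjoint products, the only route by which the shifts can influence the generation of $v_{i+1}$ is the projected term above, and orthogonality closes that route.

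The main obstacle, and the reason the statement is not an immediate corollary of the exact identity $\Kry_k(A^TA,A^Tr_0)=\Kry_k(A^TA+\lambda^2I,A^Tr_0)$, is that the operator driving the iteration is neither fixed nor symmetric (as already noted after (\ref{eq:iLanczos})) and, in the ``Moreover'' case, its inexactness depends on the very quantities we wish to vary. The delicate part is therefore to rule out that this shift-dependence leaks into $v_{i+1}$ through the coupling inherent in the two-sided iGK recurrence. I expect this to demand care in two places: first, establishing the clean one-sided form of the recurrence under $\mcF_{k+1}=0$, so that no shift enters through an inexact $A^T$ product; and second, verifying that the projected structure $V_i\tilde{\Lambda}_i^2V_i^T$ is the sole shift-bearing term and that its image of $v_i$ is a multiple of $v_i$. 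Once these are in place, the remaining ingredients — identifying the orthogonalization map with $(I-V_iV_i^T)$, the collapse of $\mcE_i v_i$ to the single-step contribution $E_iv_i$, and the base case — are routine bookkeeping on (\ref{eq:inex_gkb_0})--(\ref{1stepiLanczos}).
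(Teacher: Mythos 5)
Your argument is correct and rests on the same mechanism as the paper's proof: the shift (and, in the second case, the projected term $V_i\tilde{\Lambda}_i^2V_i^T$) contributes only $\lambda_i^2 v_i$ (resp.\ $\tilde{\lambda}_i^2 v_i$) at step $i$, a vector already in $\mcR(V_i)\subset\mcR(V_{i+1})$, so it is absorbed without changing the new basis vector. The paper states this as the one-line matrix identity $(\whA+\whmcE_i+\lambda_i^2 I)V_i=V_{i+1}(\whH_i+\lambda_i^2\bar{I})$ (and its analogue with $\tilde{\Lambda}_i^2$ folded into the Hessenberg matrix), of which your column-by-column induction with the projector $(I-V_iV_i^T)$ is just the unpacked version.
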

\begin{proof}
The proof follows directly from (\ref{eq:iLanczos}). 
Indeed, for the first statement, 
\[
(\whA + \whmcE_i)V_i + \lambda_i^2V_i = V_{i+1}\whH_i+ \lambda_i^2V_{i+1}\bar{I}=V_{i+1}(\whH_i+ \lambda_i^2\bar{I})\,,\quad i=1,\dots,k\,.
\]
For the second statement,  
\begin{eqnarray*}
(\whA + \whmcE_{i,\tilde{\Lambda}_k} + \lambda_i^2I)V_i &=& (\whA + \whmcE_{i})V_i+ V_i\tilde{\Lambda}^2_iV_i^TV_i + \lambda_i^2IV_i
= V_{i+1}\whH_i + V_i\tilde{\Lambda}_i^2 + \lambda_i^2V_i\\
&=&
V_{i+1}\left(\whH_i + \left[\begin{array}{c}
\tilde{\Lambda}_i^2\\
0
\end{array}
\right]+\lambda_i^2\bar{I}\right)\,,\quad i=1,\dots,k\,.
\end{eqnarray*}
%
\end{proof}

We now derive bounds on norms of relevant quantities computed by the exact and inexact solvers. 
We start by studying the relationship between the exact residual $r_k^e$ and the inexact residual $r_k$, $k=1,2,\dots$, 
extending to iLSQR and iCGLS 
the relations derived in \cite{inexact1} for inexact FOM and GMRES. 
When considering iLSQR (\ref{eq:iLSQR1}), it is natural to bound the distance between the exact and the inexact residuals as follows 
\begin{eqnarray}
\|r_k^e-r_k\| = \|r_0^e-Ax_k - (r_0 - (A+\mcE_k)x_k)\|&=&\|E_0x_0 + \mcE_kV_ks_k\|\nonumber\\
&\leq& \|E_0x_0\|+\sum_{l=1}^k\|E_l\|\,|[s_k]_l|\label{iLSQRdiffbound}
\end{eqnarray}
%
Directly from the relation above, the following bound for the norm of exact residual can be derived
\begin{eqnarray}
\|r_k^e\| &\leq& \|r_k\| + \|E_0x_0\|+\sum_{l=1}^k\|E_l\|\,|[s_k]_l|\,.\label{iLSQRbound}
\end{eqnarray}
Note that the residuals $r_k$, $k=1,2\dots$ are the only ones that we can assume available (and whose norms can be efficiently computed, as $\|r_k\|=\|M_ks_k - \beta e_1\|$). In the following we focus on bounds for the norm of the exact residuals only. 

Similar but somewhat more complicated estimates hold when considering iCGLS, as both errors in $A$ and $A^T$ must be included; in particular, the right-hand-side vector in equation (\ref{eq:icg1}) is affected by errors both in $r_0$ and in $A^T$. Indeed, 
\[
\whr_0=(A+F_{1})^Tr_0=\underbrace{A^T(b - Ax_0)}_{=:\whr_0^e} + \underbrace{F_1^Tb}_{=:\whb_0} - \underbrace{(F_1^TA + A^TE_0 + F_1^TE_0)x_0}_{=:\whE_0x_0}\,.
\]
When bounding the normal equations residual norm, one should then consider
\begin{eqnarray}
\|\underbrace{\whr_0^e - \whA V_ks_k}_{=:\whr_k^e}\| &=& \|\whr_0 -\whb_0 + \whE_0x_0 - (\whA+\whmcE_k)V_ks_k + \whmcE_kV_ks_k\|\nonumber\\
&\leq& \| [L_k]_{1,1}\beta e_1 - \whH_ks_k\| + \|\whb_0\| + \|\whE_0x_0\|\label{iCGLSbound}\\
& &  
+ \sum_{j=1}^k\|A^TE_j\||[s_k]_j| + \sum_{j=1}^k\left(\sum_{i=1}^{j+1}\left| [M_k]_{i,j}\right| \|F_i^T\|\right)|[s_k]_j|\,,\nonumber
\end{eqnarray}
where the last two sums are obtained applying standard bounds to $\widehat{e}_j$ in (\ref{1stepiLanczos}). 

Contrarily to the well-posed case, when solving ill-posed problems using iLSQR and iCGLS, one should not expect $r_k^e$ and $\whr_k^e$ to be close to zero, as this would imply data overfitting (recall the discussion in Section \ref{sec:exact}). As a consequence, one can allow more  inexactness. 
The above relations extend to iLSQR and iCGLS used in a hybrid fashion. 

When considering hybrid-iLSQR (\ref{eq:iprojregaugpb_1}) with a fixed $\lambda$, let us define
\[
r\lamk^e=:\left[\begin{array}{c}
r_0^e \\
0
\end{array}\right]-\left[\begin{array}{c}
A \\
\lambda I
\end{array}\right]V_k s\lamk,
\quad
r\lamk=:\left[\begin{array}{c}
r_0^e \\
0
\end{array}\right]-\left[\begin{array}{c}
A + \mcE_k\\
\lambda I
\end{array}\right]V_k s\lamk\,,
\]
so that
\begin{eqnarray}
\left\Vert r\lamk^e \right\Vert^2&=& \left\|
r\lamk
+
\left[\begin{array}{c}
\mcE_k\nonumber\\
0
\end{array}\right]V_k s\lamk
+
\left[\begin{array}{c}
E_0\\
0
\end{array}\right]x_0
\right\|^2\\
&\leq& \|r\lamk\|^2 + \|E_0x_0\|^2 + \sum_{l=1}^k\|E_l\|^2|[s\lamk]_l|^2\,.\label{hiLSQRbound}
\end{eqnarray}
In order for the exact and inexact residual norms to be close, 
one should estimate the desired magnitude of $\|r\lamk^e\|^2$. Contrarily to $\|r_{k}^e\|$, even running $n$ iterations of the (exact) hybrid LSQR would not deliver a value of $\|r_{\lamk}^e\|$ close to zero. 
We have that, ideally, if $x_\true-x_0=:z_\true$ and $e$ were known, the most desirable value of the objective function in (\ref{tikhx0}) 
would be 
\[
\|Az_\true -r_0^e\|^2 +\lambda^2\|z_\true\|^2 =  \|e\|^2 +\lambda^2\|z_\true\|^2\,.
\]
Now, depending on the application, estimates for $\|e\|^2$ and $\|z_\true\|^2$ may be available. If not, one may use a noise estimator for the former (see, e.g., \cite{noiseest}) and, recalling that
\[
\|r_0^e\|^2\leq \|A\|^2\|z_\true\|^2 + \|e\|^2\quad \mbox{(directly from (\ref{eq:linsys}))},\quad\mbox{and}\quad\|r_0\|\leq \|r_0^e\|+\|E_0x_0\|\,,
\]
take the lower bound
\begin{eqnarray}\label{zbound}
\|z_\true\|^2 &\geq& \frac{1}{\|A\|^2}(\|r_0\|^2 - \|e\|^2 -\|E_0x_0\|^2) 
%
\end{eqnarray}
for the latter, where $\|A\|$ should be estimated as well. 


When considering hybrid-iCGLS (\ref{eq:iCGderivation}) with a fixed $\lambda$, the estimates are very similar to the ones written above in the iCGLS case. 
Namely, 
\begin{eqnarray}
\|\whr\lamk^e\| 
&\leq&\|\whr\lamk\| + \|\whb_0\| + \|\whE_0x_0\| \label{hiCGLSbound}\\
& & + \sum_{j=1}^k\|A^TE_j\||[s_{\lamk}]_j| + \sum_{j=1}^k\left(\sum_{i=1}^{j+1}\left| [M_k]_{i,j}\right| \|F_i^T\|\right)|[s_{\lamk}]_j|\nonumber
\end{eqnarray}
where $\whr\lamk^e:=\whr_0^e - (\whA + \lambda^2 I)x\lamk$ and $\whr\lamk=\whr_0  - ((\whA + \whmcE_k) + \lambda^2 I)x\lamk$. 
We should however stress that, differently from all the other estimates so far derived, $\|\whr\lamk^e\|$ should be close to zero when, given a suitable value of $\lambda$, a good regularized solution is computed (these are indeed the optimality conditions for problem (\ref{tikhx0})); therefore, a strict monitoring of the inexactness in $\whr\lamk$ may be necessary. 

We conclude this section by mentioning that inequalities (\ref{iLSQRbound}), (\ref{iCGLSbound}), (\ref{hiLSQRbound}) and (\ref{hiCGLSbound}), being expressed with respect to the $k$th projected solutions $s_k$ or $s_{\lamk}$, cannot be straightforwardly employed when, at the $j$th iteration of the inexact solvers, one may need to bound every $\|E_j\|$, $1\leq j\leq k$, to guarantee that $\|r_{k}\|$, $\|\whr_{k}\|$, $\|r{\lamk}\|$ and $\|\whr{\lamk}\|$ are close enough to their exact counterparts. 
To allow this, \cite{inexact1} considers upper bounds for the magnitude of the components of the $k$th projected solution 
depending on the norm of the $j$th inexact residual and the smallest singular value of the $k$th projected coefficient matrix: 
if the latter can be easily estimated ahead of the iterations, $\|E_j\|$ can then be adaptively bounded. This approach can be straightforwardly extended to the new methods introduced in this section. 
More precisely, denoting by $\sigma_k(C)$ the $k$th singular value of a matrix $C$, one can state that, if
\begin{equation}\label{iLSQRboundalt}
\|E_j\|\leq \frac{\sigma_k(M_k)}{k}\frac{1}{\|r_{j-1}\|}\eps\quad\mbox{and}\quad
\|E_j\|\leq \frac{(\sigma_k(M_k^TM_k+\lambda^2I))^{\nicefrac{1}{2}}}{k}\frac{1}{\|r_{\lambda,j-1}\|}\eps\,,
\end{equation}
then the last term in the last inequality in (\ref{iLSQRbound}) and (\ref{hiLSQRbound}), respectively, is bounded by $\eps$. 
%
Similar bounds can be derived for the iCGLS-based solvers (\ref{iCGLSbound}), (\ref{hiCGLSbound}). Although a careful analysis of the behavior of  $\sigma_k(M_k)$ and $\|r_{j-1}\|$ still has to be performed for iLSQR, it is well known that, if the (exact) GK algorithm were adopted in the framework of (\ref{eq:linsys}), then $\sigma_k(M_k)$ would eventually be numerically zero, while $\|r_{j-1}\|$ would eventually stabilize around $\|e\|$; see \cite{gazzola2016inheritance, survey, HnPlSt09}. This trend is enhanced in the case of severely ill-posed problems. If such a behavior is also assumed when using iGK, the first bound in (\ref{iLSQRboundalt}) would eventually prescribe a numerically zero $\|E_j\|$, while the second bound in (\ref{iLSQRboundalt}) would be more permissive, in that $(\sigma_k(M_k^TM_k+\lambda^2I))^{\nicefrac{1}{2}}$ would eventually stabilize around $\lambda$. A numerical illustration is given in Section \ref{ssec:solerror}.

\section{Inexact Krylov methods for blind deblurring}\label{sec:blind} 
In this section we explain how the inexact solvers presented in Section \ref{sec:inexact} can be adopted to solve separable nonlinear inverse problems of the form (\ref{eq:sepnonlinsys}), which we compactly rewrite as
\begin{eqnarray}
z_\lambda = \arg\!\!\!\!\min_{z\in\R^n, y\in\R^p}g(z,y)\,,\;\mbox{where}\; \begin{array}{lcl}
g(z,y)\!\!\!\!&=&\!\!\!\!\|F(z,y)\|^2\\
F(z,y)\!\!\!\!&=&\!\!\!\!\wtAlam(y)z - \wtr_0\\
\wtAlam(y)\!\!\!\!&=&\!\!\!\![\,A^T(y), \lambda I\,]^T,\;\wtr_0=[r_0^T, 0^T]^T\\
x_\lambda\!\!\!\!&=& x_0 + z_\lambda
\end{array}.
\label{eqtikhybis}
\end{eqnarray}
In particular, we will target blind image deblurring problems using a variable projection method, 
and we will display the results of some numerical tests, including comparisons with other Krylov-based approaches for blind deconvolution.

\subsection{Problem formulation}\label{ssec:pb}
Here and in the following, the unknown $x\in\R^n$ appearing in (\ref{eq:linsys}) is a vectorialized image obtained by stacking the columns of the 2D image $X\in\R^{N\times N}$, with $n=N^2$. 
The matrix $A\in\R^{n\times n}$ models a spatially invariant blurring process, i.e., a convolution process defined assigning a point spread function (PSF) that describes the deformation undergone by each entry (pixel) of $X$, and boundary conditions that prescribe the behavior of the pixels at the boundaries of $X$. Conventionally (see, e.g., \cite{JulianneJim}), 
a PSF $P\in\R^{N\times N}$ is a sparse 
image with only a few nonzero pixels located at the center of $P$. In the parametric model adopted here, the entries of the PSF are assigned an analytical expression depending on some parameters $y$: such a parametric PSF is denoted by $P(y)$. 
A parameter-dependent blurring matrix may be denoted by $A(P(y))$, using an alternative notation to $A(y)$ appearing in 
(\ref{eqtikhybis}); 
moreover, committing a slight abuse of notation, we may write matrix-vector products with $A$ as either $A(y)x$ or $A(y)X$. In the following we consider Gaussian blurs, where $y=[\sigma_1,\sigma_2,\rho]^T$ and the $(i,j)$th entry of the PSF centred at pixel $(\chi_1,\chi_2)$ reads
%
%
\begin{equation}\label{GaussPSF}
[P(y)]_{i,j}={c(\sigma_1,\sigma_2,\rho)}\exp\left(-\frac{1}{2}\left[\begin{array}{c}
i-\chi_1 \\
j-\chi_2
\end{array}\right]^T
\left[\begin{array}{cc}
\sigma_1^2 & \rho^2\\
\rho^2 & \sigma_2^2
\end{array}\right]^{-1}
\left[\begin{array}{c}
i - \chi_1\\
j - \chi_2
\end{array}\right]\right)\,.
\end{equation}
Here $\sigma_1$ and $\sigma_2$ determine the spread of the Gaussian, and $\rho$ determines its orientation; 
$c(\sigma_1,\sigma_2,\rho)$ is a scaling factor introduced so that $\sum_{i,j=1}^N[P(y)]_{i,j}=1$. Note that 
\begin{equation}\label{constry}
\sigma_1^2\sigma_2^2-\rho^4 >0
\end{equation}
should also be imposed for (\ref{GaussPSF}) to be meaningful. 
An important property of the blurring matrix is that, thanks to the particular structure of a blurring matrix $A$, 
\begin{equation}\label{blurdual}
A(y)X = A(y)x = A(P(y))x = A(X)P(y)\,.
\end{equation}
When considering blind deblurring within this framework, the parameters $y=y_\true$ defining the blur are unknown and should be recovered alongside an approximation of $x$. Without any loss of generality, reflexive (or Neumann) boundary conditions (fixed during the iterative solver) are assumed from now on. 

Up to Section \ref{ssec:numer}, we give some numerical illustrations of the behaviour of inexact solvers on a specific simple test problem generated as follows. We take the well-known \texttt{satellite} test image of size $256\times 256$ pixels from \cite{restoreT}: we apply a Gaussian blur (\ref{GaussPSF}) with parameters $y_\true=[2.5, 2.5, 0]^T$ (so that only one blurring parameter has to be recovered), followed by corruption by Gaussian white noise of level $\|e\|/\|b_\true\|=10^{-2}$. 
Exact and corrupted images, together with the exact PSF, are displayed in Figure \ref{fig:testP}. The quality of the reconstructions (for both $x$ and $y$) will be measured by the relative reconstruction error, i.e.,
\begin{equation}\label{RRE}
\mbox{RRE}_{x}=\frac{\|x-x_\true\|}{\|x_\true\|},\quad \mbox{RRE}_{y}=\frac{\|y-y_\true\|}{\|y_\true\|}
\end{equation}

Exploiting the fact that (\ref{eqtikhybis}) is linear in $z=x-x_0$, the variable projection method \cite{GolubVP} implicitly eliminates the dependence on the linear parameter $z$, and obtains a reduced cost functional that depends on $y$ only. More precisely, using the same notations as in (\ref{eqtikhybis}), we introduce the functional 
\begin{equation}\label{hdef}
h(y):=g(z_\lambda(y),y)\,,\quad\mbox{where}\quad \begin{array}{lcl}
z_\lambda(y) &=& \arg\min_{z\in\R^n}g(z,y)\\
&=&(A^T(y)A(y)+\lambda^2I)^{-1}A^T(y)r_0
\end{array},
\end{equation}
and take $x_\lambda(y)=x_0+z_\lambda(y)$. 
We then apply Gauss-Newton to minimize $h(y)$, so that we have to compute the gradient
\begin{equation}\label{gradh}
\nabla_y h(y)=\frac{dz_\lambda}{dy}\nabla_{z_{\lambda}} g(z_\lambda,y)+\nabla_{y} g(z_\lambda,y)=\nabla_{y} g(z_\lambda,y)=J_h^TF(z_\lambda,y)\,,
\end{equation}
where $J_h$ is the Jacobian of the function $F$ defined in (\ref{eqtikhybis}), i.e., \vspace{-0.2cm}
\begin{equation}\label{def:Jacob}
J_h = \left[\begin{array}{c}
\nicefrac{d (A(y)z_\lambda)}{d y}\\
0
\end{array}
\right]= \left[\begin{array}{c}
\widehat{J}_h\\
0
\end{array}
\right]\,.
\vspace{-0.1cm}
\end{equation}
In deriving (\ref{gradh}) we used the chain rule and, in the penultimate equality, the fact that $\nabla_{z_\lambda}g(z_\lambda,y)=0$ because of the definition of $z_{\lambda}(y)$ in (\ref{hdef}). The main steps involved in the application of the Gauss-Newton method to minimize $h(y)$ in (\ref{hdef}) are summarized in Algorithm \ref{alg:old}, lines 7 to 9. As observed in \cite{JulianneJim}, the Jacobian $\widehat{J}_h\in\R^{p\times n}$ can be computed analytically exploiting the property (\ref{blurdual}) and, since $p\ll n$, the LS problem in line 8 of Algorithm \ref{alg:old} can be conveniently solved. The steplength $\gamma_l$ in line 9 of Algorithm \ref{alg:old} can be determined using a line search (such as an Armijo rule), which may require a repeated computation of $z_{\lambda}(y)$ (see, e.g., \cite{WrightOpt}). 
We emphasise that, in the setting of unstructured large-scale problems, two main challenges arise: first, $z_\lambda$ cannot be computed directly using the formula appearing in (\ref{hdef}); second, a suitable value of the regularization parameter $\lambda$ may not be known in advance of the iterations and may depend on the current value of $y$. The authors of \cite{JulianneJim} devise an efficient and effective way of overcoming these challenges by using the LSQR-based hybrid method (\ref{eq:projregpb}), 
with adaptive regularization parameter choice and a stopping criterion based on GCV: this is summarized in Algorithm \ref{alg:old}, lines 3 to 6. 
%
\begin{algorithm}
\caption{Variable projection with Gauss-Newton and \emph{hybrid-LSQR} solver}
\label{alg:old}
\begin{algorithmic}[1]
\State{Choose initial guesses $x_0$ and $y_0$.}
\For{$l=1,2,\dots$ until a stopping criterion is satisfied}
\For{$k=1,2,\dots$ until a stopping criterion is satisfied}
\State{Expand $\Kry_k(A(y_{l-1})^TA(y_{l-1}), A(y_{l-1})^Tr_0)$} using GK (\ref{eq:Lanczos1})
\State{Compute $x\lamk$ solving problem (\ref{eq:projregpb}) with adaptive choice of $\lambda$}
\EndFor
\State{Compute the residual ${r}_{l-1} = b-A(y_{l-1})x\lamk$}
\State{Compute $d_{l-1} = \arg\min_d\|\widehat{J}_hd-{r}_{l-1} \|$}
\State{Update $y_{l}=y_{l-1}+\gamma_l d_{l-1}$ (setting the steplength $\gamma_l$)}
\State{Update $x_0$}
\EndFor
\end{algorithmic}
\end{algorithm}

\subsection{Solution by inexact Krylov methods and error control}\label{ssec:solerror}
The method outlined in Algorithm \ref{alg:old} involves an inner-outer iteration scheme, where 
a hybrid-LSQR method fully runs for each value of the blurring parameters determined within the Gauss-Newton outer iterations. The basic idea leading to the use of inexact Krylov methods in the setting of blind deblurring is to allow Gauss-Newton updates of the blurring parameters at each iteration of 
the hybrid method used to approximate the deblurred image. This implies that the coefficient matrix for the computation of $z_\lambda(y)$ in (\ref{eqtikhybis}) is applied with varying amount of inexactness, using the hybrid-iLSQR or the hybrid-iCGLS methods (Section \ref{sec:hybridiLSQR} and \ref{sec:hybridiCGLS}, respectively) rather than the hybrid-LSQR. 
The bounds derived in Section \ref{sec: theory} should be employed to monitor the quality of the solution: when exceeding the tolerated amount of inexactness, the hybrid inexact methods should be restarted. 
In the following we explain how inexactness is defined in the blind deblurring setting, and we tailor the iGK algorithm 
to this application; a sketch is provided in Algorithm \ref{alg:new}. Note that only the hybrid-iLSQR method will be considered from now on: the derivations below can be easily extended to hybrid-iCGLS, which performs very similarly to hybrid-iLSQR on the tested problems. 

\begin{figure}
\begin{tabular}{ccc}
\hspace{-1.6cm}{\small {\bf (a)} $x_\true$} & \hspace{-1.6cm}{\small {\bf (b)} $y_\true$} & \hspace{-1.6cm}{\small {\bf (c)} $b$}\vspace{-0.0cm}\\
\hspace{-1.6cm}\includegraphics[width=5.5cm]{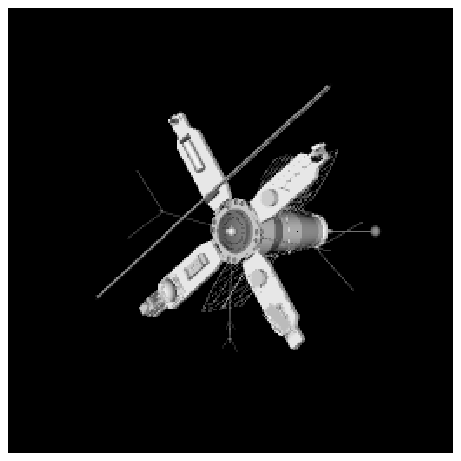} &
\hspace{-1.6cm}\includegraphics[width=5.5cm]{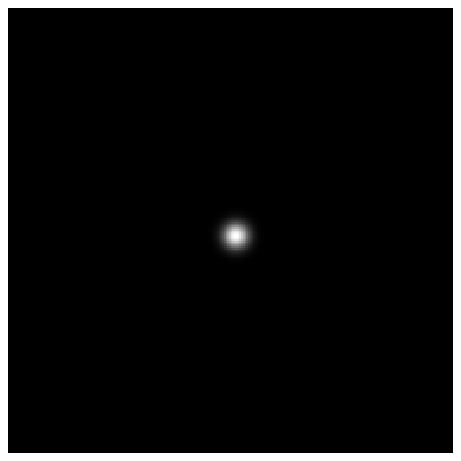} & 
\hspace{-1.6cm}\includegraphics[width=5.5cm]{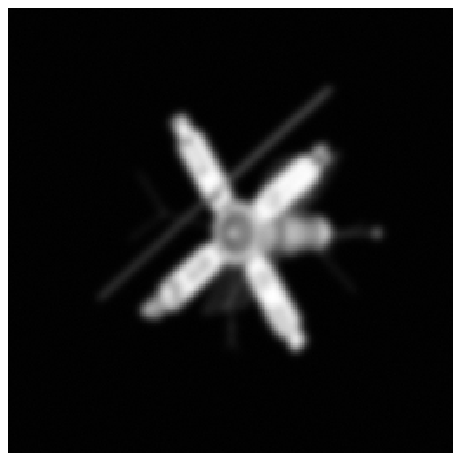}\vspace{-0.7cm}\\
\end{tabular}
\caption{Illustrative \texttt{satellite} test problem. {\bf (a)} Exact test image $x_\true$. {\bf (b)} Blow up (400\%) of the exact Gaussian PSF (\ref{GaussPSF}) with $y_\true=[2.5,2.5,0]^T$. 
{\bf (c)} Data $b$ (blurred and noisy version of $x_\true$, with $\|e\|/\|b_{\true}\|=10^{-2}$). 
}\label{fig:testP}
\vspace{-0.5cm}
\end{figure}

\begin{algorithm}
\caption{Variable projection with Gauss-Newton and \emph{hybrid-iLSQR} solver}
\label{alg:new}
\begin{algorithmic}[1]
\State Choose initial guesses $x_0$ and $y_0$; set an accuracy $\eps$.
\For{$l=1,2,\dots$ until a stopping criterion is satisfied}
\For{$k=1,2,\dots$ until inexactness (bounded by $\eps$) is tolerated}
\State{Expand the approximation subspace $\mathcal{R}(V_k)$ using $A(y_{k-1})$ and iGK (\ref{eq:inex_gkb_2})}
\State{Compute $x\lamk$ solving problem (\ref{eq:iprojregaugpb_1}) with adaptive choice of $\lambda$}
\State{Compute the residual ${r}_{k-1} = b-A(y_{k-1})x\lamk$}
\State{Compute $d_{k-1} = \arg\min_d\|\widehat{J}_hd-{r}_{k-1} \|$}
\State{Update $y_{k}=y_{k-1}+\gamma_kd_{k-1}$ (setting the steplength $\gamma_k$)}
\EndFor
\State{Update $x_0$; take $y_0=y_k$}
\EndFor
\end{algorithmic}
\end{algorithm}

Defining inexactness in the setting of blind deblurring is not straightforward: since $y_\true$ is unknown, $A(y_\true)$ is unavailable and an absolute concept of error cannot be defined. Our pragmatical point of view is to consider as exact blurring matrix the latest computed approximation of $A(y)$. That is, if $j-1$ iterations of Algorithm \ref{alg:new} (lines 3 to 9) are performed, $A(y_{j-1})$ is regarded as the exact coefficient matrix for the $j$th iteration, so that 
\begin{equation}\label{def:error}
A(y_{i-1}) = A(y_{j-1}) + E_i^j\,,\quad\mbox{where}\quad E_i^j:=A(y_{i-1}) - A(y_{j-1})\,,
\end{equation}
is the inexact matrix applied at iteration $i\leq j$. A few remarks are in order here. First of all, such a definition of inexactness is iteration-dependent, i.e., it is valid at the $j$th 
iteration of Algorithm \ref{alg:new} only, and should be updated as the iterations proceed; hence the notation $E_i^j$ for the error in $A(y_{i-1})$. Second, according to (\ref{def:error}), when computing the $j$th product at the $j$th iGK iteration, $E_{j}^j=0$: this is somewhat opposite to the situation described in \cite{inexact1}, where an increasing amount of inexactness is generally allowed as the iterations of the inexact solver proceed. 
The inexactness defined in (\ref{def:error}) can anyway be tolerated, as far as the bounds derived in Section \ref{sec: theory} can be satisfied. 
Finally, the definition of inexactness (\ref{def:error}) well-matches with the approach adopted in Algorithm \ref{alg:old}: indeed, given a current value $y_{j-1}$ of the blurring parameters, $E_i^j$ quantifies how much the previous $y_{i-1}$, $i\leq j$, are allowed differ, so that the performance of hybrid-iLSQR with varying $y$ in Algorithm \ref{alg:new} is similar to the performance of hybrid-LSQR with fixed $y=y_{j-1}$ in Algorithm \ref{alg:old}. 

According to the definition (\ref{def:error}), after $j$ iGK iterations are performed 
(leading to the blurring parameter $y_{j}$), 
the partial decomposition in (\ref{eq:inex_gkb_2}) reads 
\[
(A(y_{j-1})+\mathcal{E}_j^j)V_j = U_{j+1} M_j\,,\quad
(A(y_{j-1})+{\mathcal{F}}_{j+1}^j)^T U_{j+1}= V_{j+1} L_{j+1}^T\,,
\]
where
\begin{equation}\label{errblind}
\begin{array}{lcl}
\mathcal{E}_j^j &=&\sum_{i=1}^{j} E_{i}^j v_i v_i^T\\
\mathcal{F}_{j+1}^j&=&\sum_{i=1}^{j+1} (E_{i-1}^j)^T u_iu_i^T\\
\end{array}
\quad\mbox{and}\quad
\begin{array}{l}
E_0^j = A(y_0) - A(y_{j-1})\\
E_i^j\; \mbox{ is as in (\ref{def:error}), $i=1,\dots,j$}
\end{array}.
\end{equation}
The specific expressions (\ref{errblind}) are linked to the fact that the updated $y_j$ is computed after both $u_{j+1}$ and $v_{j+1}$ are computed (in this order). 

To guarantee that, at the $j$th iteration of Algorithm \ref{alg:new} with a fixed regularization parameter $\lambda$, the norm of the `exact' residual (which would have been obtained applying hybrid-iLSQR with coefficient matrix $A(y_{j-1})$) is sufficiently close to the norm of the computed residual, the bound (\ref{hiLSQRbound}) should be checked, using the iteration-specific definition of $E_i^j$, $i=1,\dots,j$ in (\ref{def:error}): the $(j+1)$th iGK iteration is performed only if such a condition is satisfied (and condition (\ref{hiLSQRbound}) is then checked with the updated $E_i^j$, $i=1,\dots,j+1$); otherwise iGK should be restarted, possibly taking as initial guess $x_0$ for the deblurred image the last valid approximation of $x$, i.e., $x_0=x_{\lambda,j-1}$, and as an initial guess $y_0$ for the blurring parameters their last computed value, i.e., $y_0=y_j$. Alternatively, to guarantee that `exact' and computed residuals are sufficiently close, one can employ the second bound in (\ref{iLSQRboundalt}). It should be stressed that, in the blind deblurring setting, the amount of inexactness is dictated by the Gauss-Newton updates and, therefore, cannot be adaptively set. 

Figure \ref{fig:bounds} displays the behavior of some relevant quantities obtained running 60 iLSQR and hybrid-iLSQR iterations (the latter with a fixed regularization parameter $\lambda=5\cdot 10^{-1}$), starting with $x_0=0$ and $y_0=[7,7,0]^T$. 
Looking at frame (a) we can clearly see that, as the number of iGK iterations $k$ increases, both $\|r_k\|$ and the smallest singular value of the iLSQR projected matrix $M_k$ steadily decrease; when considering hybrid-iLSQR, thanks to regularization, $\|r\lamk\|$ stabilizes and the decay of the smallest singular value is slower. This implies that the bounds in (\ref{iLSQRboundalt}), for a fixed $\eps$, are more strict in the iLSQR than in the hybrid-iLSQR case, as it is evident in frames (c) and (d) (where $\eps=1$). Because of the behavior of $\|r_{j-1}\|$ and $\|r_{\lambda,j-1}\|$, the most stringent bound in (\ref{iLSQRboundalt}) is the one for $j=0$: this is depicted in frame (b) for values of $k=1,\dots,60$. 
\begin{figure}
\begin{tabular}{cc}
{\small \bf (a)} & {\small \bf (b)}\vspace{-0.1cm}\\
\includegraphics[width=4.7cm]{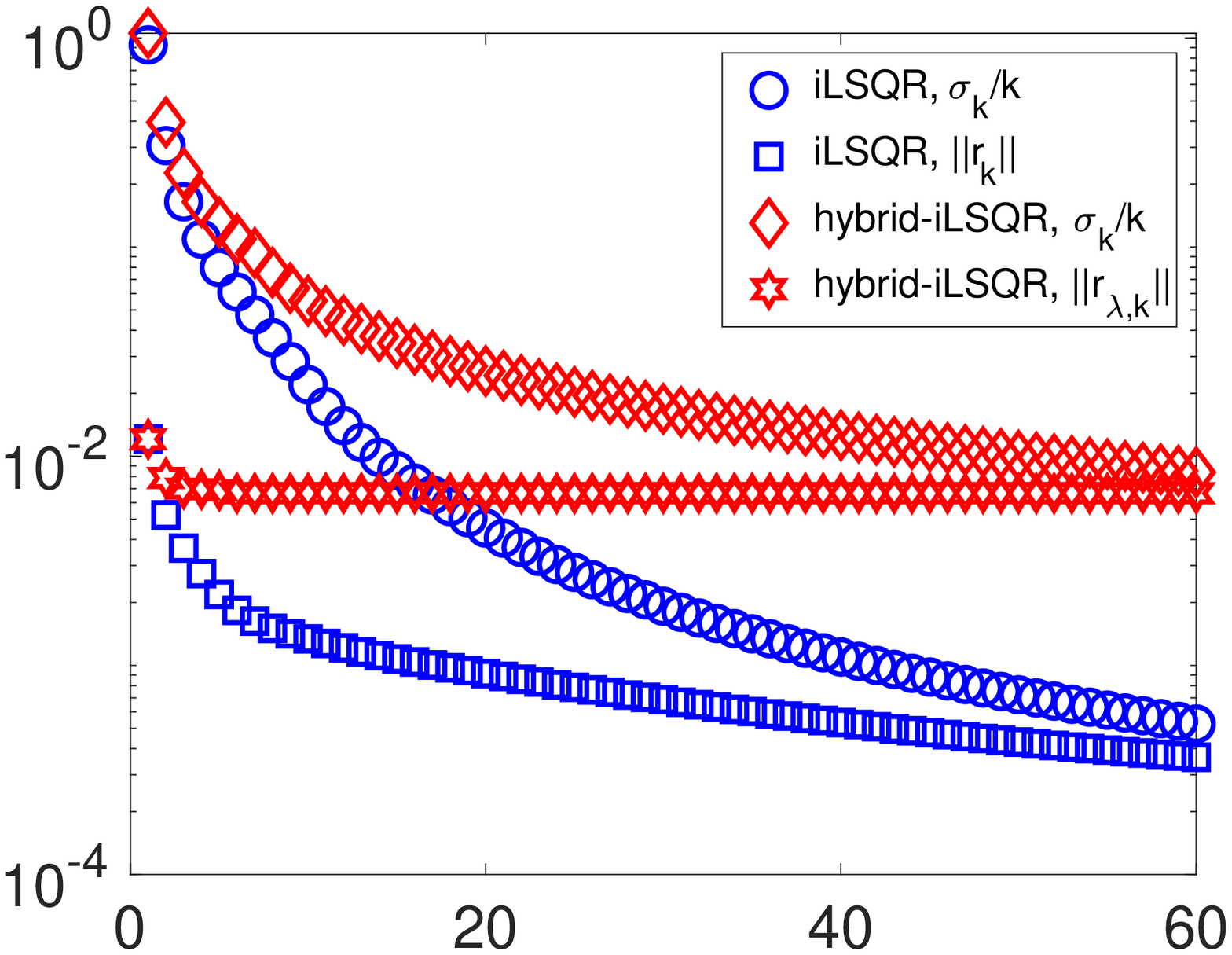} &
\includegraphics[width=4.7cm]{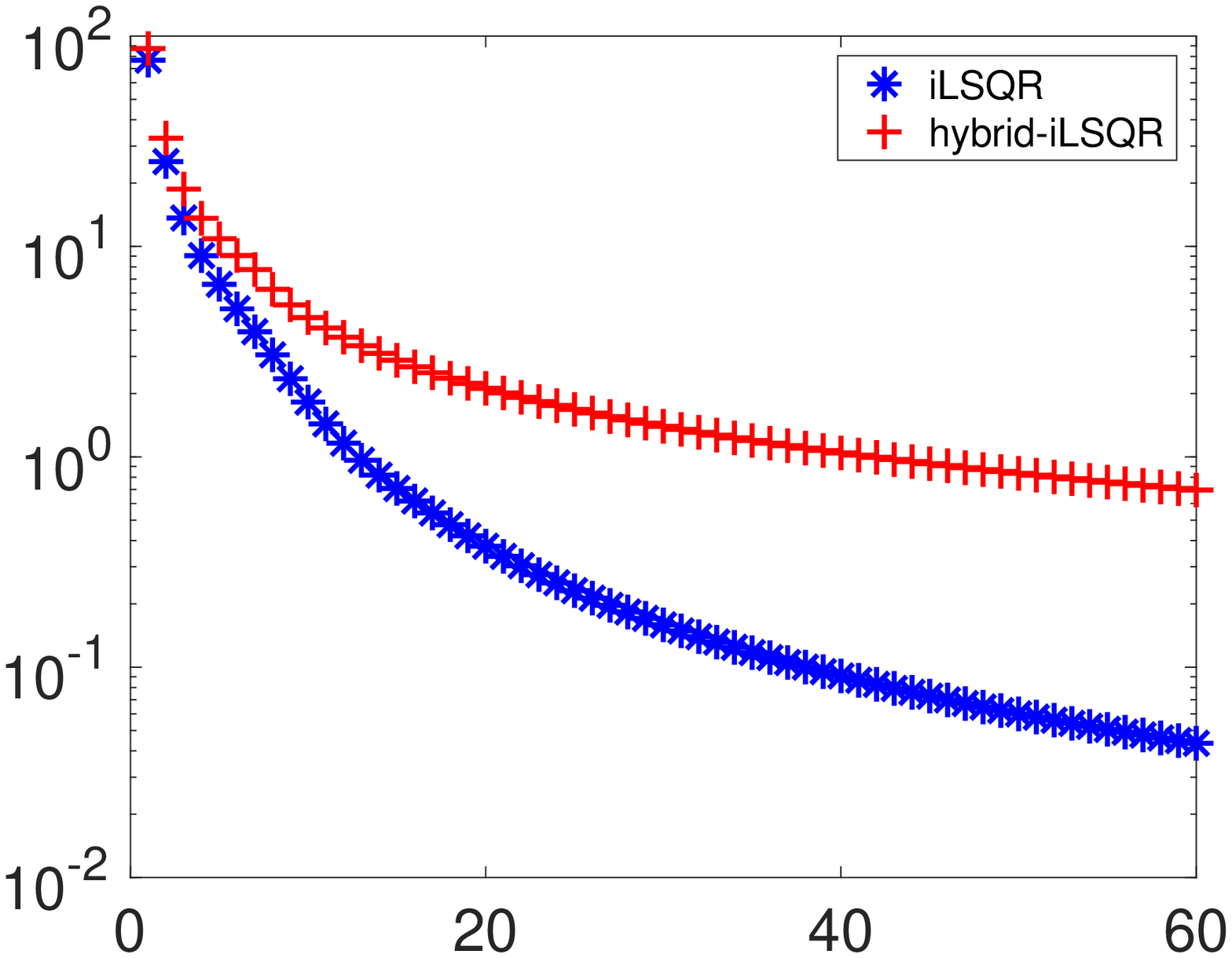}\vspace{-0.2cm}\\
{\small \bf (c)} & {\small \bf (d)}\vspace{-0.1cm}\\
\includegraphics[width=4.7cm]{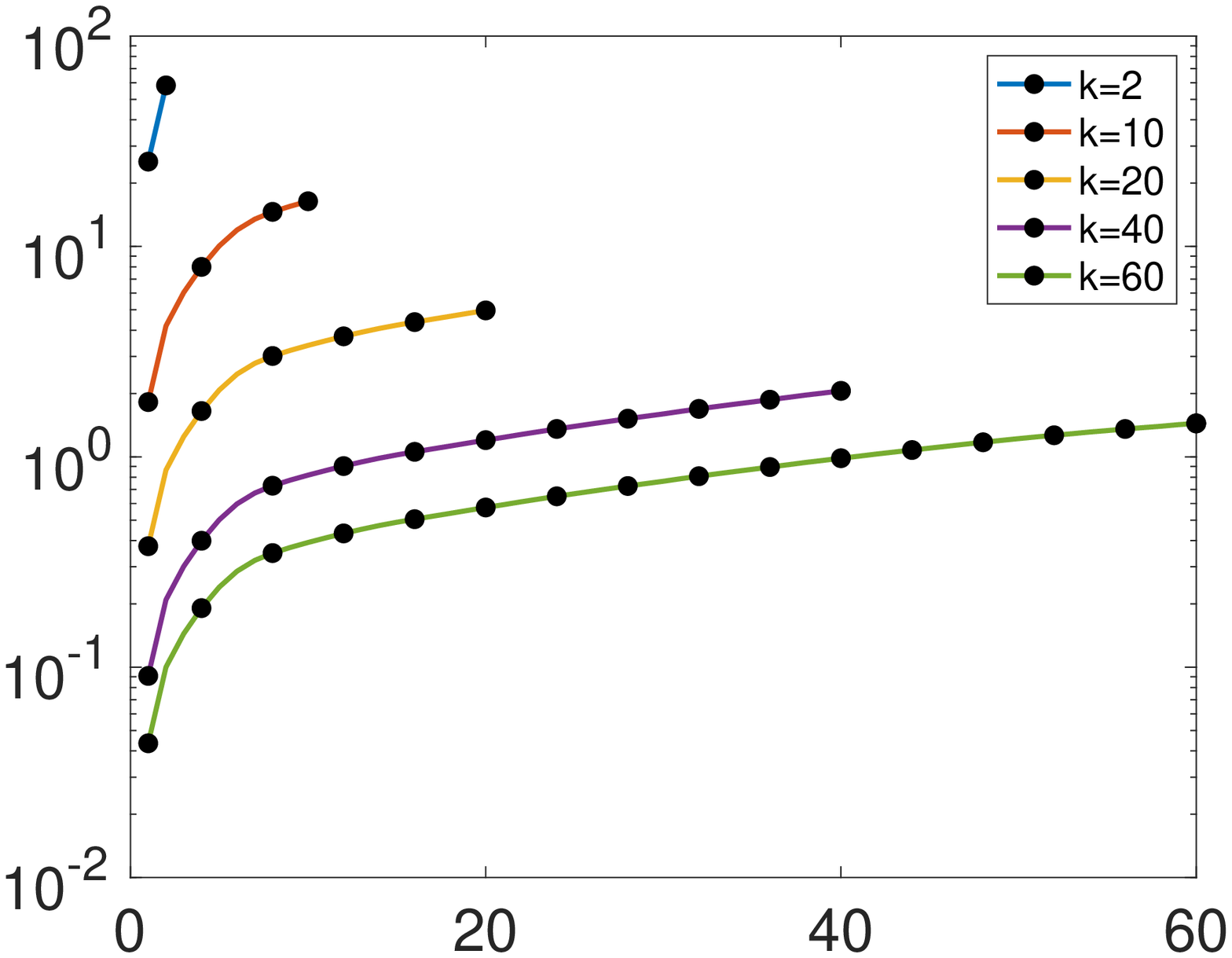} &
\includegraphics[width=4.7cm]{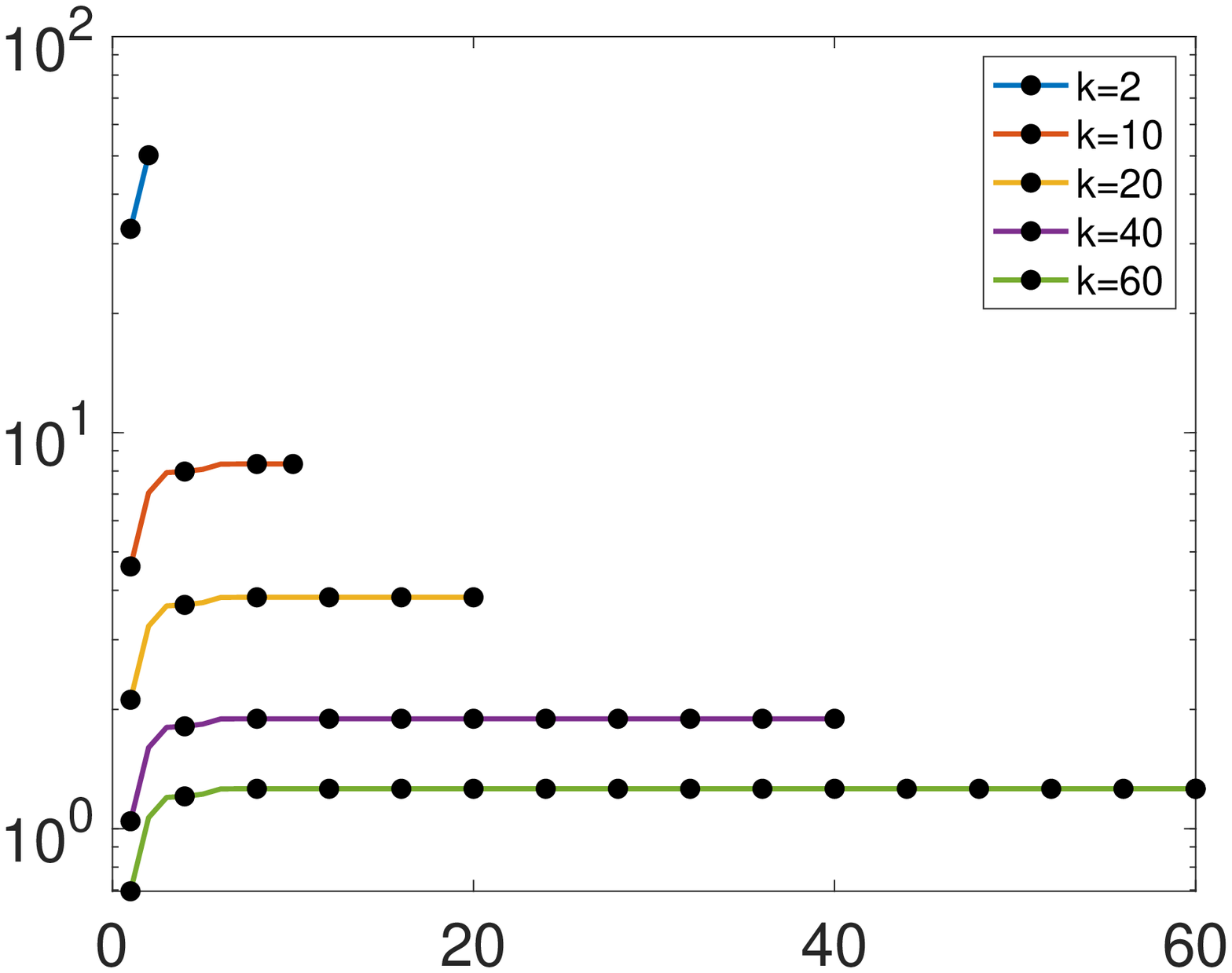}
\end{tabular}
\caption{Illustrative \texttt{satellite} test problem, with $\lambda=5\cdot 10^{-1}$: quantities appearing in the bounds (\ref{iLSQRboundalt}) after 60 iGK iterations are performed. {\bf (a)} History of the quantities $\sigma_k(M_k)/k$ and $\|r_{k-1}\|$ (for iLSQR) and $\sigma_k((M_k^TM_k+\lambda^2I))^{1/2}/k$ and $\|r_{\lambda,k-1}\|$ (for hybrid-iLSQR) versus $k$. {\bf (b)} History of the quantities $\sigma_k(M_k)/(k\|r_{0}\|)$ (for iLSQR) and $\sigma_k((M_k^TM_k+\lambda^2I))^{1/2}/(k\|r_{\lambda,0}\|)$ (for hybrid-iLSQR) versus $k$. {\bf (c)} Bounds (\ref{iLSQRboundalt}) for iLSQR: history of the quantities $\sigma_k(M_k)/(k\|r_{j-1}\|)$ versus $j$, for $k=2,10,20,40,60$ and $j=1,\dots,k$. {\bf (d)} Bounds (\ref{iLSQRboundalt}) for hybrid-iLSQR: history of the quantities $\sigma_k((M_k^TM_k+\lambda^2I))^{1/2}/(k\|r_{\lambda,j-1}\|)$ versus $j$, $j=1,\dots,k$, $k=2,10,20,40,60$.\vspace{-0.5cm}}\label{fig:bounds}
\end{figure}

We conclude this section by providing some details about the blurring parameters updates performed by the Gauss-Newton method (Algorithm \ref{alg:new}, line 8), as the inexact solvers used to approximate $z_\lambda$ in (\ref{eqtikhybis}) also have an impact on computation of $y$. 
Similarly to what happens at the $l$th outer iteration of Algorithm \ref{alg:old}, at the $(j+1)$th iteration of Algorithm \ref{alg:new} we would like the (exact) Tikhonov objective function (\ref{eqtikhybis}) to decrease, i.e., 
\begin{equation}\label{idealineqy}
\|\wtAlam(y_j)z_{\lambda,j+1}-\wtr_0\|\leq 
\|\wtAlam(y_{j-1})z_{\lambda,j}-\wtr_0\|\,.
\end{equation}
Assuming that the regularization parameter $\lambda$ is fixed and using the notations
\[
\wtE^{j}_i=[\,(E^j_i)^T,\,0^T\,]^T\in\R^{2n\times n},\; i=0,\dots,j,\quad \wtmcE^{j}_j=[\,(\mcE^j_j)^T,\,0^T\,]^T\in\R^{2n\times n}\,,
\]
where $E^j_i$ and $\mcE_i^j$ are defined as in (\ref{errblind}), let us assume that
\begin{equation}\label{ass:boundE}
\|\wtE_0^{j+1} x_0\| + \|\wtmcE_{j+1}^{j+1}\underbrace{V_js_{\lambda,j}}_{=z_{\lambda,j}}\|\leq \wteps\,,\quad \|\wtE_0^{j+1} x_0\| + \|\mcE_{j+1}^{j+1}\underbrace{V_{j+1}s_{\lambda,j+1}}_{=z_{\lambda,j+1}}\|\leq \wteps\,.
\end{equation}
%
Denoting by $\wts_{\lambda,j+1}=[s_{\lambda,j}^T, 0]^T\in\R^{j+1}$, it follows that 
\begin{eqnarray*}
& & \|\wtAlam(y_{j})V_{j+1}s_{j+1} - \wtr_0\|-\wteps \leq \|(\wtAlam(y_{j})+\wtmcE_{j+1}^{j+1})V_{j+1}s_{j+1}- \wtr_0\|\\
&\leq& \|(\wtAlam(y_{j})+\wtmcE_{j+1}^{j+1})V_{j+1}\wts_{\lambda,j+1}- \wtr_0\| =  \|(\wtAlam(y_{j})+\wtmcE_{j+1}^{j+1})V_{j}s_{\lambda,j}- \wtr_0\| \\
&\leq& \|\wtAlam(y_{j})V_{j}s_{\lambda,j}- \wtr_0\|+\wteps
\leq \|\wtAlam(y_{j-1})V_{j}s_{\lambda,j}- \wtr_0\|+\wteps\,.
\end{eqnarray*}
In the above chain of inequalities, the first one comes from the triangular inequality and (\ref{ass:boundE}), the second one comes from the hybrid-iLSQR optimality property (\ref{opthiLSQR}), the third equality holds because $E_{k+1}^{k+1}=0$, the fourth inequality comes from the triangular inequality and (\ref{ass:boundE}) and, finally, the fifth inequality holds because of the Gauss-Newton step (lines 7 and 8 of Algorithm \ref{alg:new}, with $J_h$ defined as in (\ref{def:Jacob})). 
Therefore, instead of (\ref{idealineqy}), we get
\begin{equation}\label{trueineqy}
\|\wtAlam(y_j)z_{\lambda,j+1}-\wtr_0\|\leq 
\|\wtAlam(y_{j-1})z_{\lambda,j}-\wtr_0\|+2\wteps\,,
\end{equation}
so that, because of the inexactness in hyrbid-iLSQR, in theory the decrease of the objective function in (\ref{eqtikhy}) is not guaranteed. To mitigate the theoretical lack of monotonicity (\ref{trueineqy}), we choose an optimal steplength in the Gauss-Newton step with respect to the second argument of $g(z_\lambda(y),y)$ only, i.e., we compute
\begin{equation}\label{yupdateapprox}
y_{j}= y_{j-1} + \gamma_j d_{j-1}\,,\quad\mbox{where}\quad
\gamma_{j}=\arg\min_{\gamma\geq 0}g(z_\lambda(y_{j-1}),y_{j-1} + \gamma d_{j-1})\,.
\end{equation}
This can be achieved using a numerical optimizer, such as MATLAB's \texttt{fminsearch}, around $\gamma=1$. 
Figure \ref{fig:theoryill} frame (e) displays the behavior of the exact and inexact versions of the objective function $g$ appearing in (\ref{eqtikhybis}), i.e., computed with $\wtAlam(y_{j})$ and $(\wtAlam(y_{j})+\wtmcE_{j+1}^{j+1})$, respectively: one can clearly see that both of them decrease as the iterations $j$ progress, and their values are quite similar. Although these results refer to the test problem described in Section \ref{ssec:pb} with $\wteps=10^{-2}$, this desirable behavior was observed in all the performed experiments.
Finally, in order for the Gaussian PSF (\ref{GaussPSF}) to be defined, the entries of $y$ should satisfy the constraint (\ref{constry}), which should be imposed when computing the Gauss-Newton update (\ref{yupdateapprox}) at the $j$th iteration of Algorithm \ref{alg:new}. Moreover, as the left-hand-side of (\ref{constry}) approaches zero, the PSF reduces to a single bright pixel, and multiplication by the corresponding blurring matrix (at the $(j+1)$th iteration of Algorithm \ref{alg:new}) would lead to stagnation of the iGK algorithm (\ref{eq:iGKDvect}). When constraint violation happens we skip the Gauss-Newton update of $y$ (i.e., we take $\gamma_j=0$) but we keep updating $x$, leading to a reduction of the inexact objective function in (\ref{eqtikhybis}). 
\begin{figure}
\begin{tabular}{ccc}
\hspace{-1.0cm}{\small \bf (a) $\RRE_x$} & \hspace{-0.7cm}{\small \bf (b) $\sigma_1$} & \hspace{-0.7cm}{\small \bf (c) $(V_{10}^{\lambda_1})^TV_{10}^{\lambda_2}$}\vspace{-0.0cm}\\
\hspace{-1.0cm}\includegraphics[width=4.7cm]{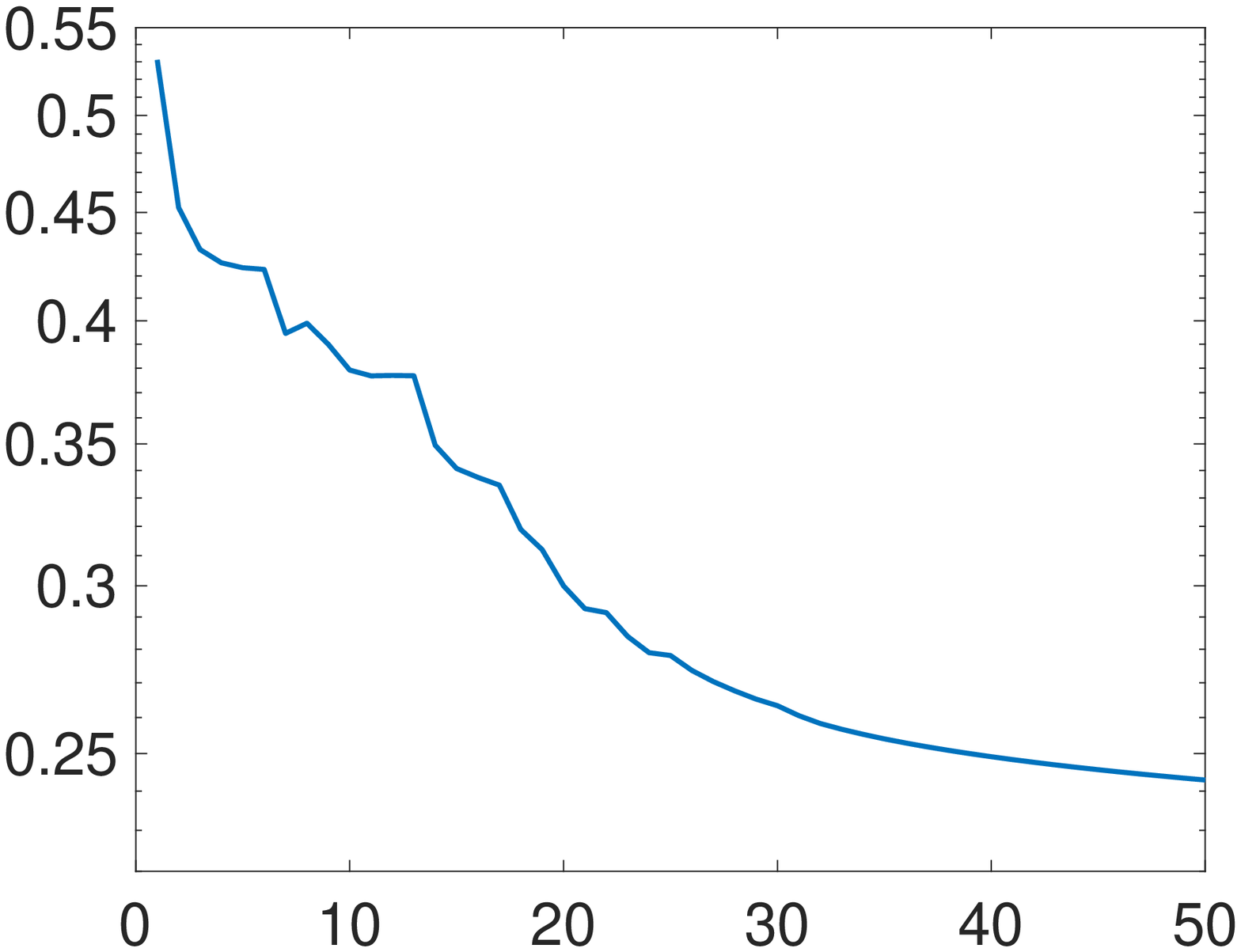} &
\hspace{-0.7cm}\includegraphics[width=4.7cm]{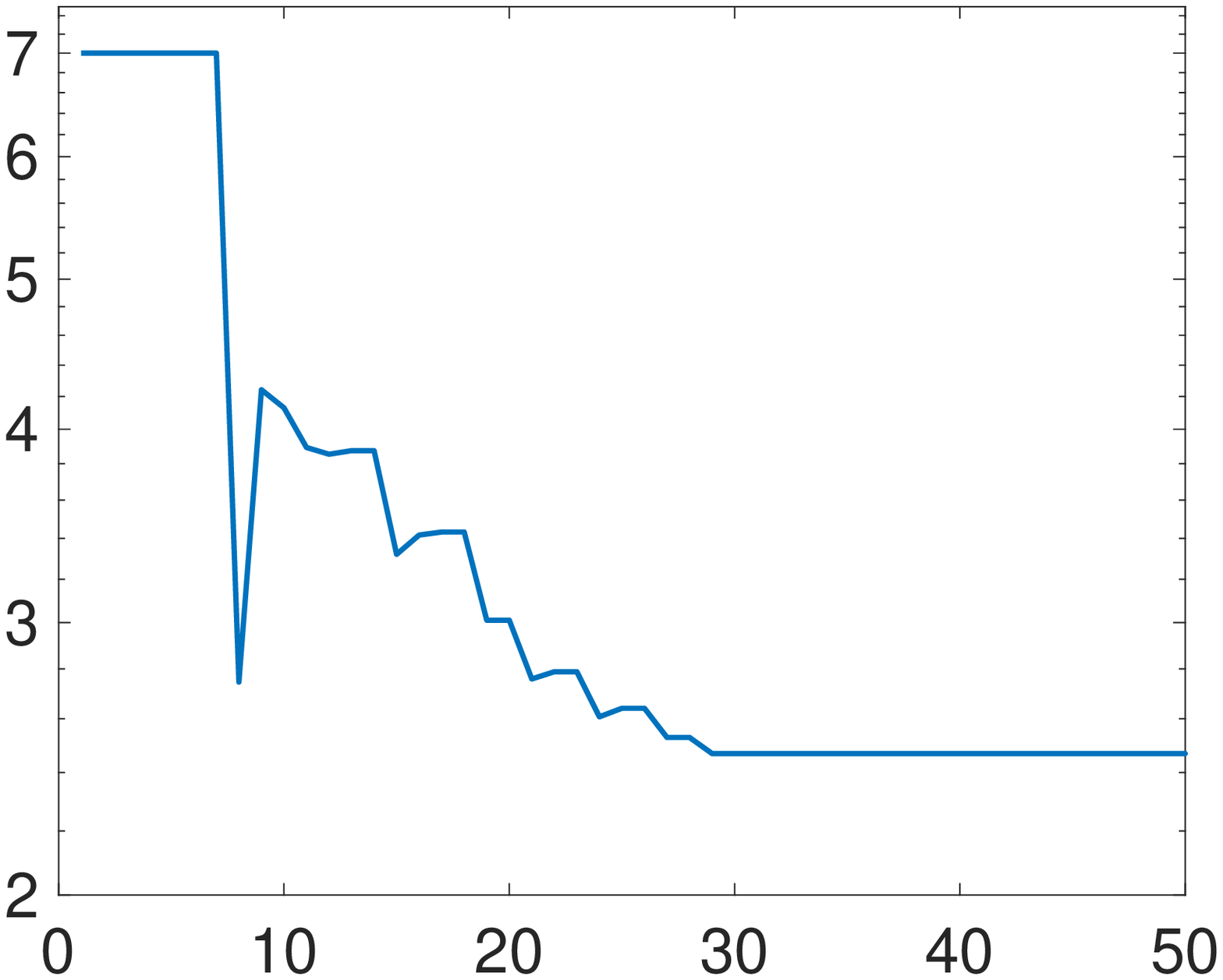} & 
\hspace{-0.7cm}\includegraphics[width=4.7cm]{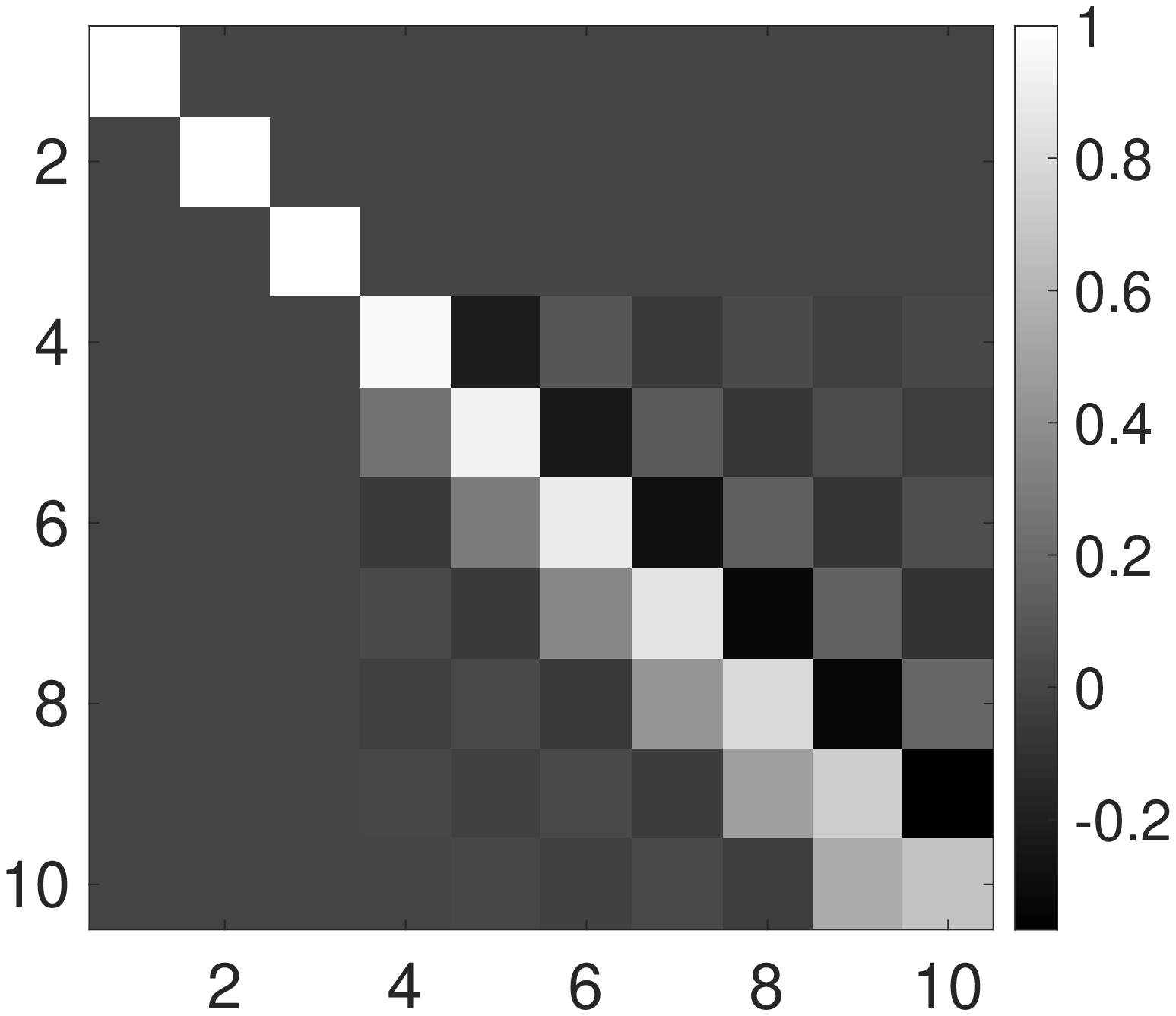}\vspace{-0.1cm}\\
\hspace{-1.0cm}{\small \bf (d) iteration count} & \hspace{-0.7cm}{\small \bf (e)} & \hspace{-0.7cm}{\small \bf (f) $(V_{10}^{\lambda_1})^TV_{10}^{\lambda_2}$}
\vspace{-0.0cm}\\
\hspace{-1.0cm}\includegraphics[width=4.7cm]{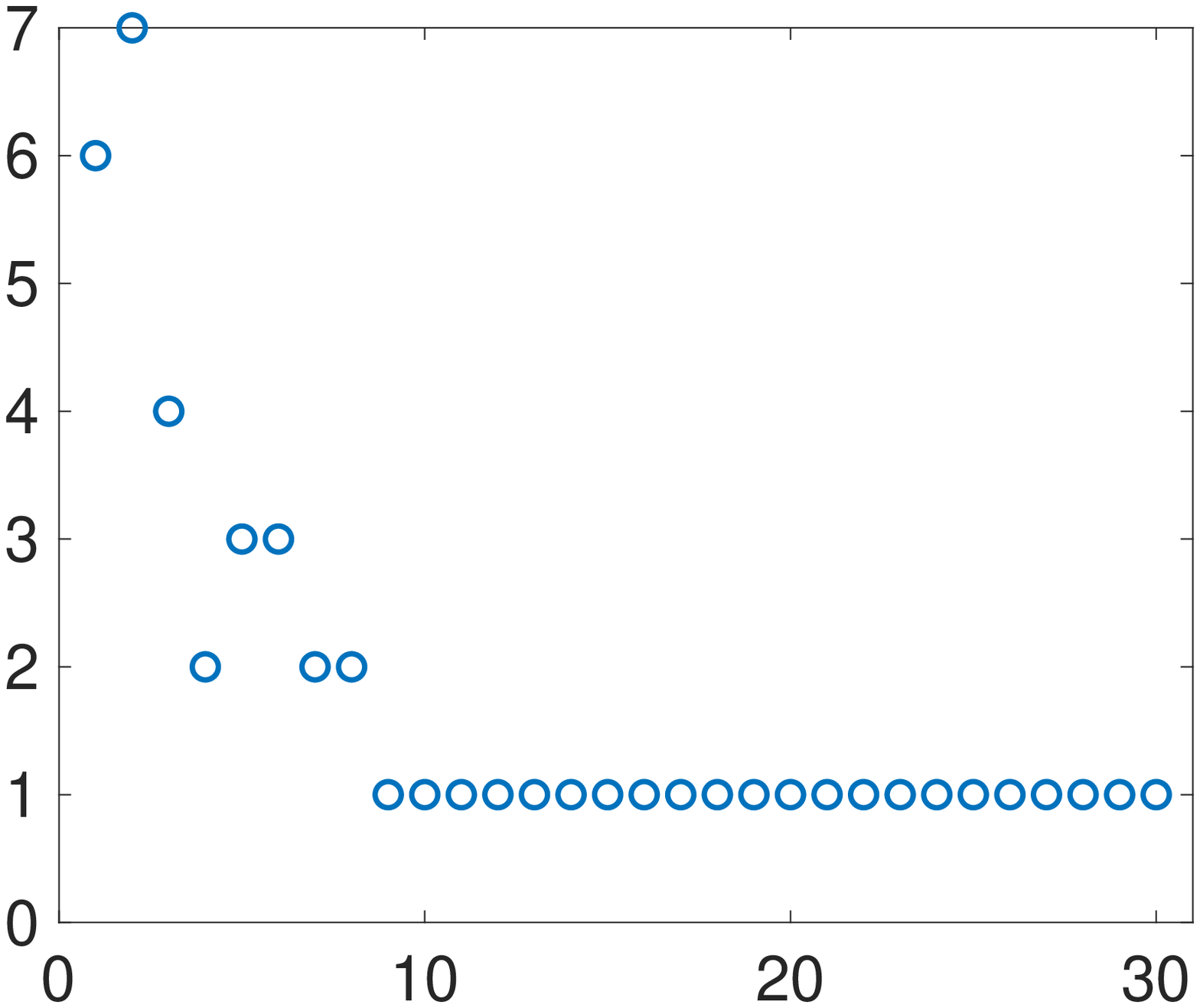} &
\hspace{-0.7cm}\includegraphics[width=4.7cm]{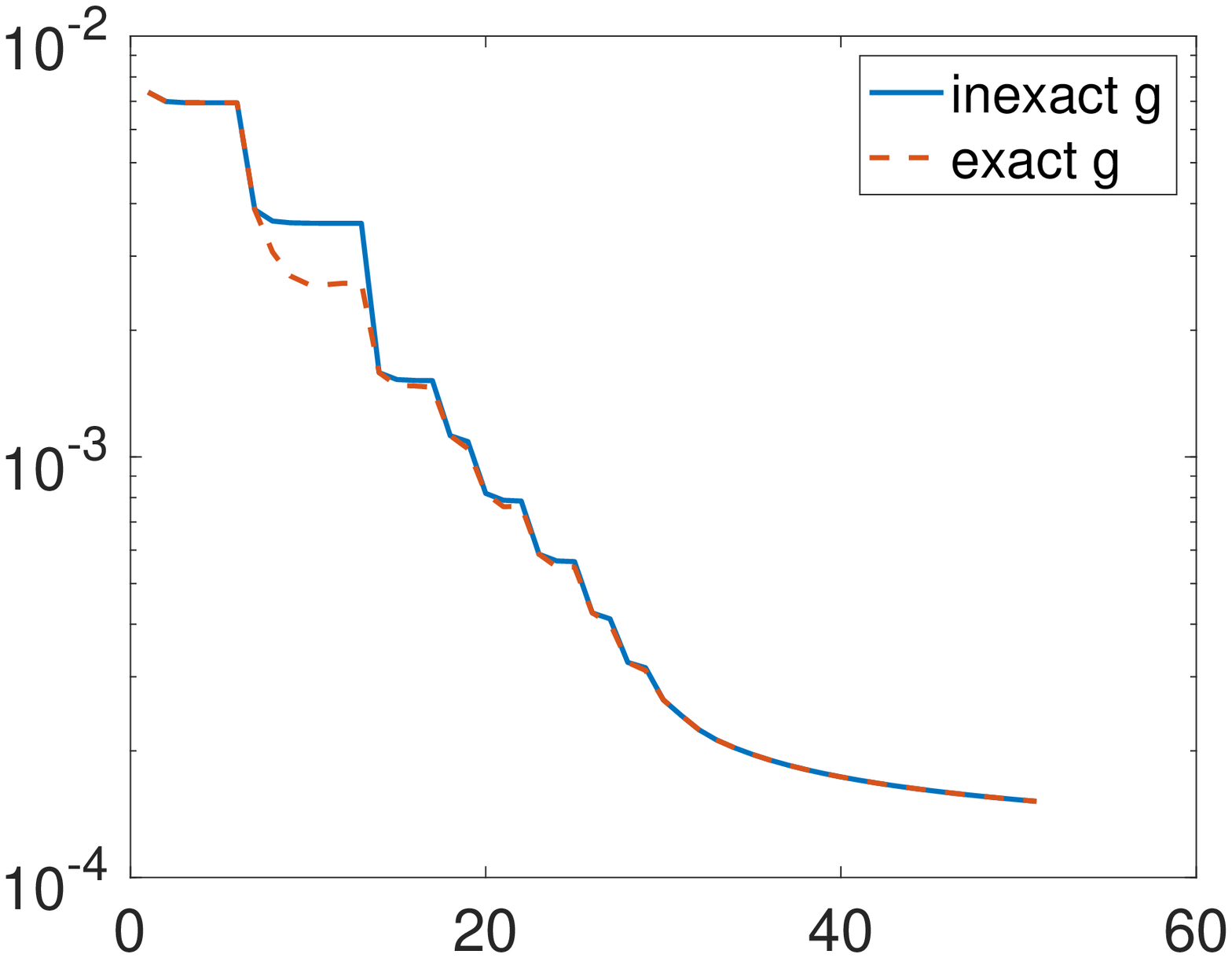} & 
\hspace{-0.7cm}\includegraphics[width=4.7cm]{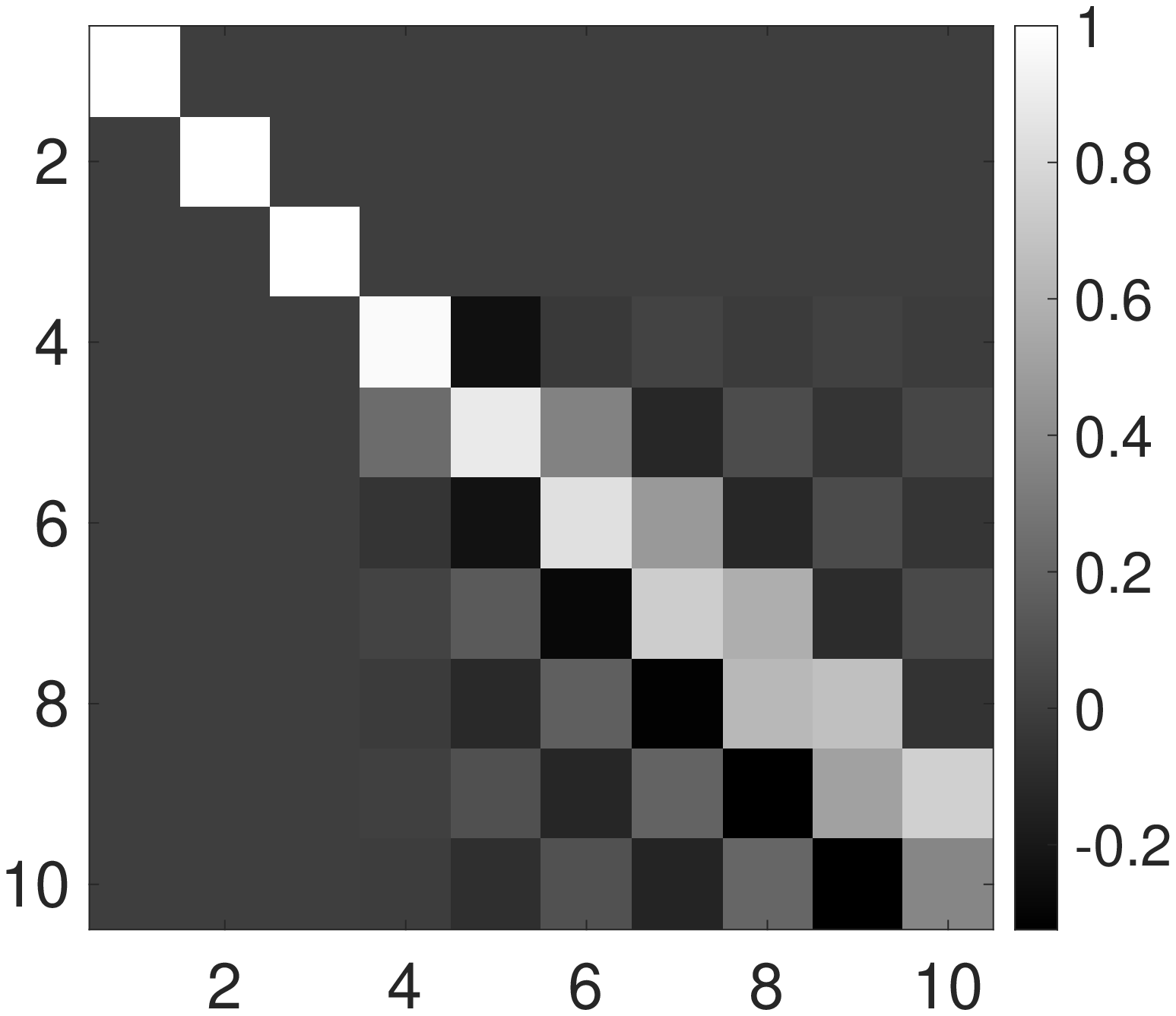}
\end{tabular}
\caption{Illustrative \texttt{satellite} test problem, for Algorithm \ref{alg:new} with $\lambda=5\cdot 10^{-1}$. \textbf{(a)} Relative errors versus total iterations. \textbf{(b)} Value of the blurring parameter $\sigma_1=\sigma_2$ versus total iterations. \textbf{(d)} Number of inner iterations versus number of outer iterations. \textbf{(e)} Exact and inexact objective function $g$ (computed using $\wtAlam(y_j)$ and $(\wtAlam(y_{j})+\wtmcE_{j+1}^{j+1})$, respectively), versus total iterations. The remaining frames display the entries of the matrix $(V_{10}^{\lambda_1})^TV_{10}^{\lambda_2}$, taking $\lambda_1=0$ and $\lambda_2=0.5$ (frame \textbf{(c)}) and $\lambda_1=0$ and $\lambda_2$ variable at each iteration (frame \textbf{(f)}): since $(V_{10}^{\lambda_1})^TV_{10}^{\lambda_2}\neq I$ the inexact Krylov subspace $\mcR(V_{10})$ is not shift-invariant.}\label{fig:theoryill}
\vspace{-0.5cm}
\end{figure}

\begin{figure}
\begin{tabular}{ccc}
\hspace{-1.0cm}{\small \bf (a) $\RRE_x$} & \hspace{-0.7cm}{\small \bf (b) $\RRE_x$}& \hspace{-0.7cm}{\small \bf (c) $\RRE_x$}\vspace{-0.4cm}\\\\
\hspace{-1.0cm}\includegraphics[width=4.7cm]{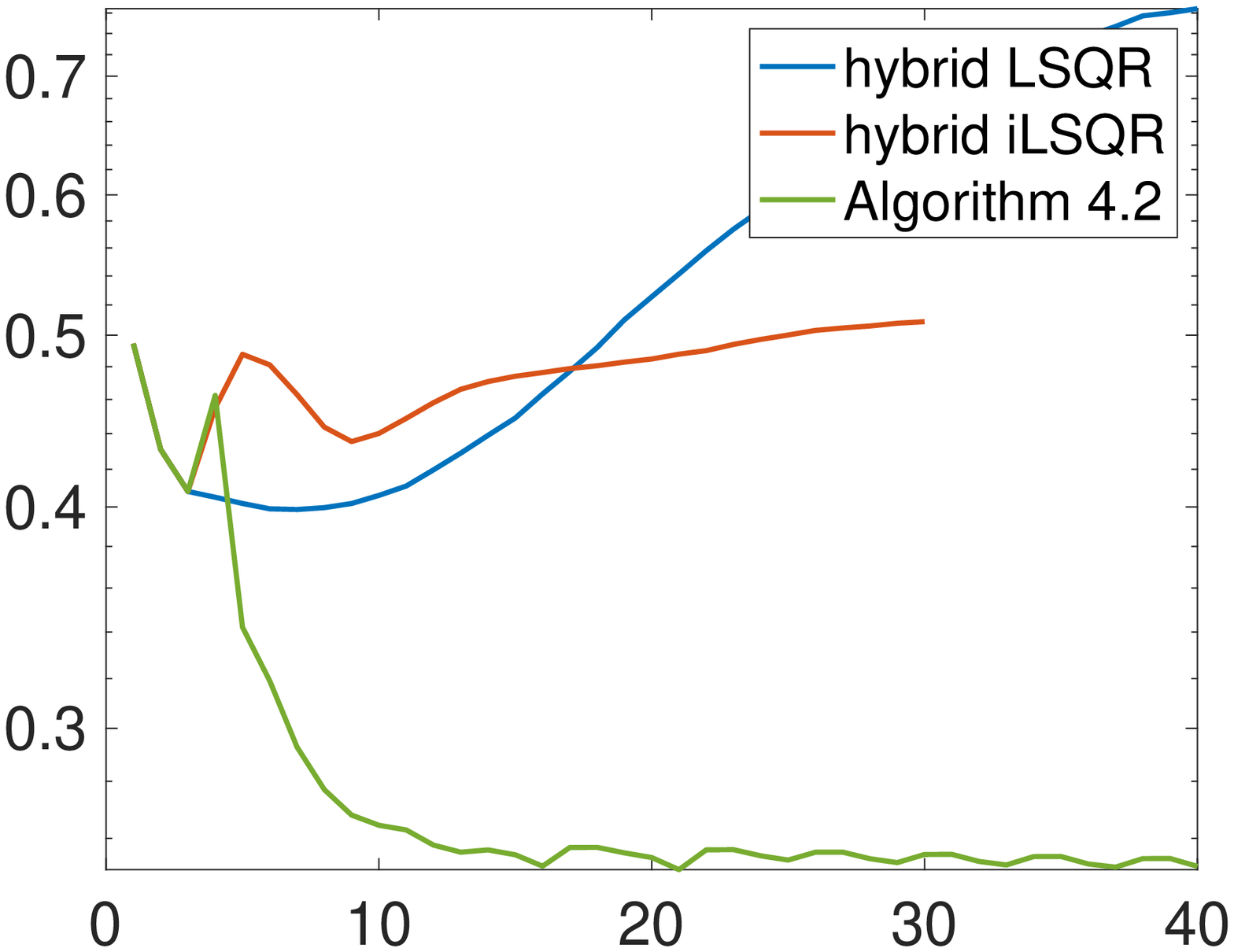} &
\hspace{-0.7cm}\includegraphics[width=4.7cm]{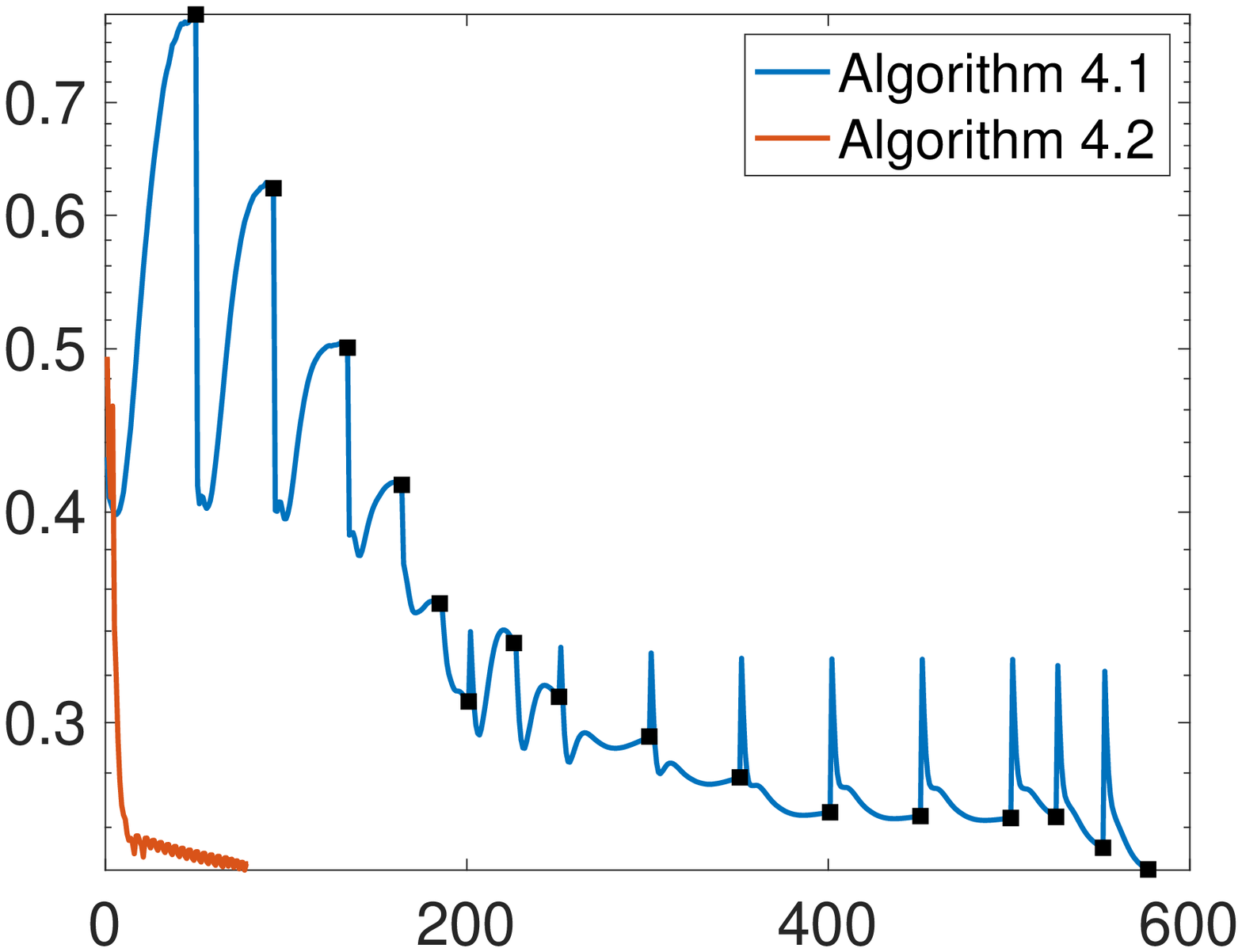} & 
\hspace{-0.7cm}\includegraphics[width=4.7cm]{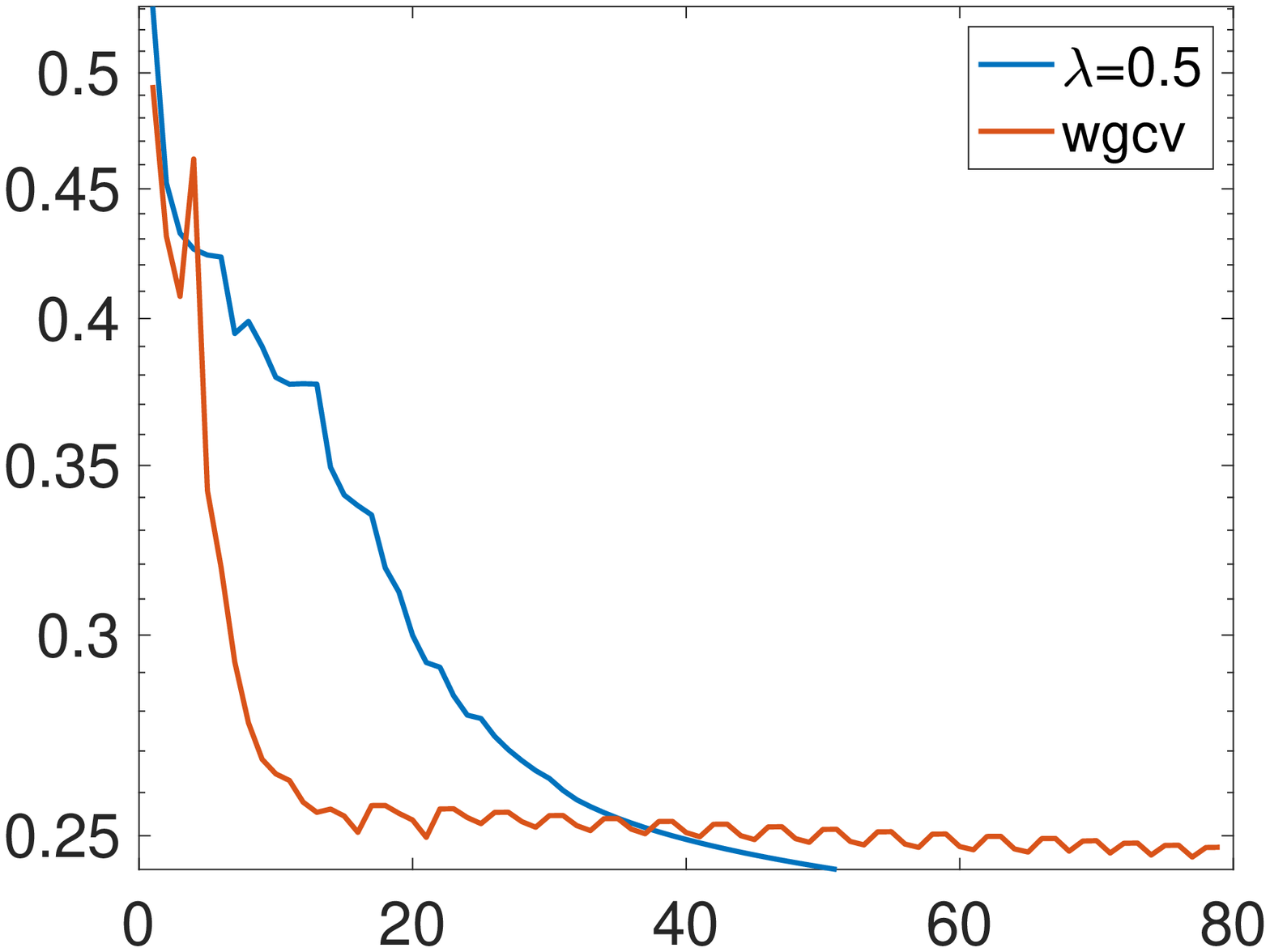}\vspace{-0.1cm}\\
\hspace{-1.0cm}{\small \bf (d) blur param. $y$} & \hspace{-0.7cm}{\small \bf (e) blur param. $y$} & \hspace{-0.7cm}{\small \bf (f) blur param. $y$}\vspace{-0.0cm}\\
\hspace{-1.0cm}\includegraphics[width=4.7cm]{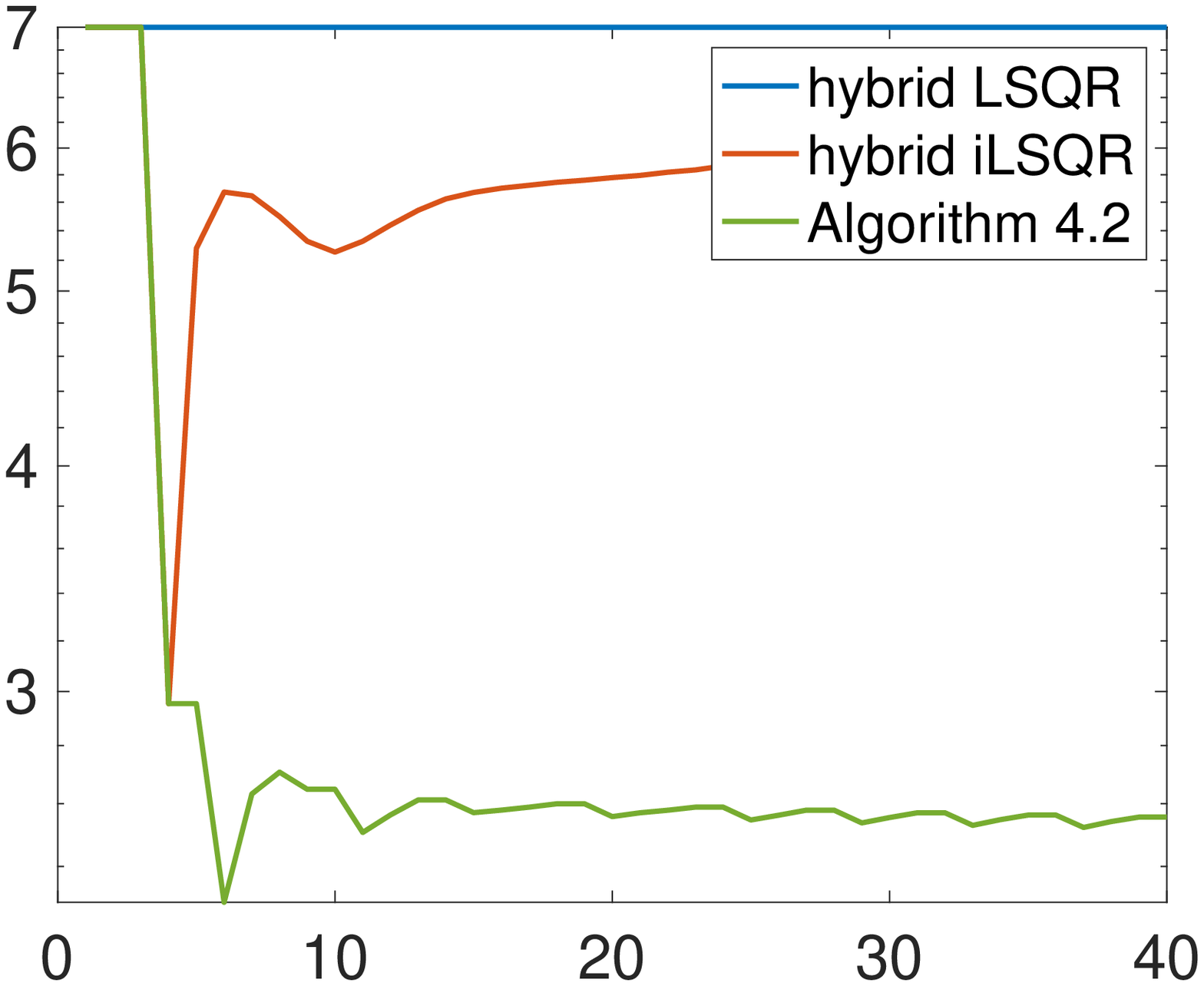} &
\hspace{-0.7cm}\includegraphics[width=4.7cm]{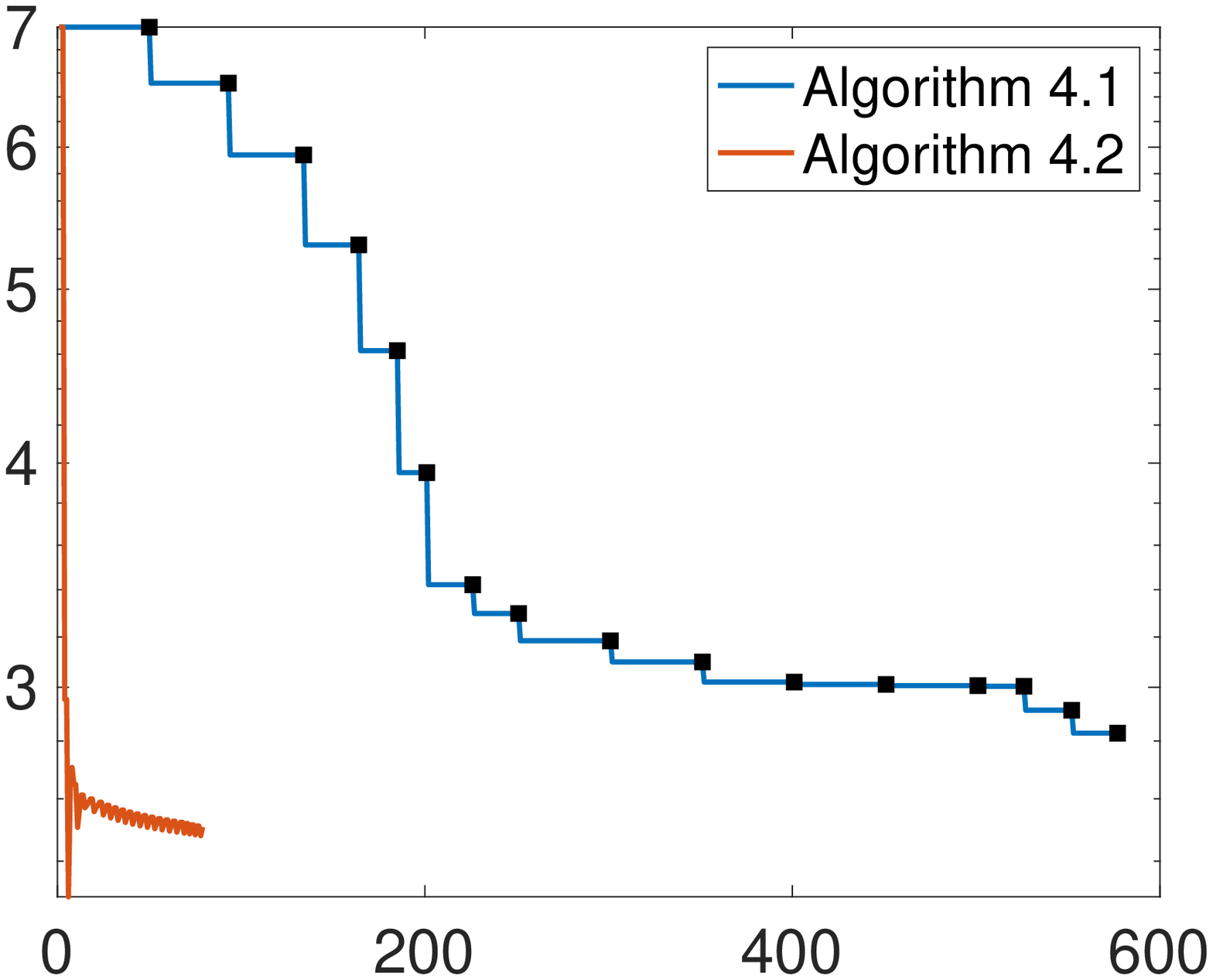} &
\hspace{-0.7cm}\includegraphics[width=4.7cm]{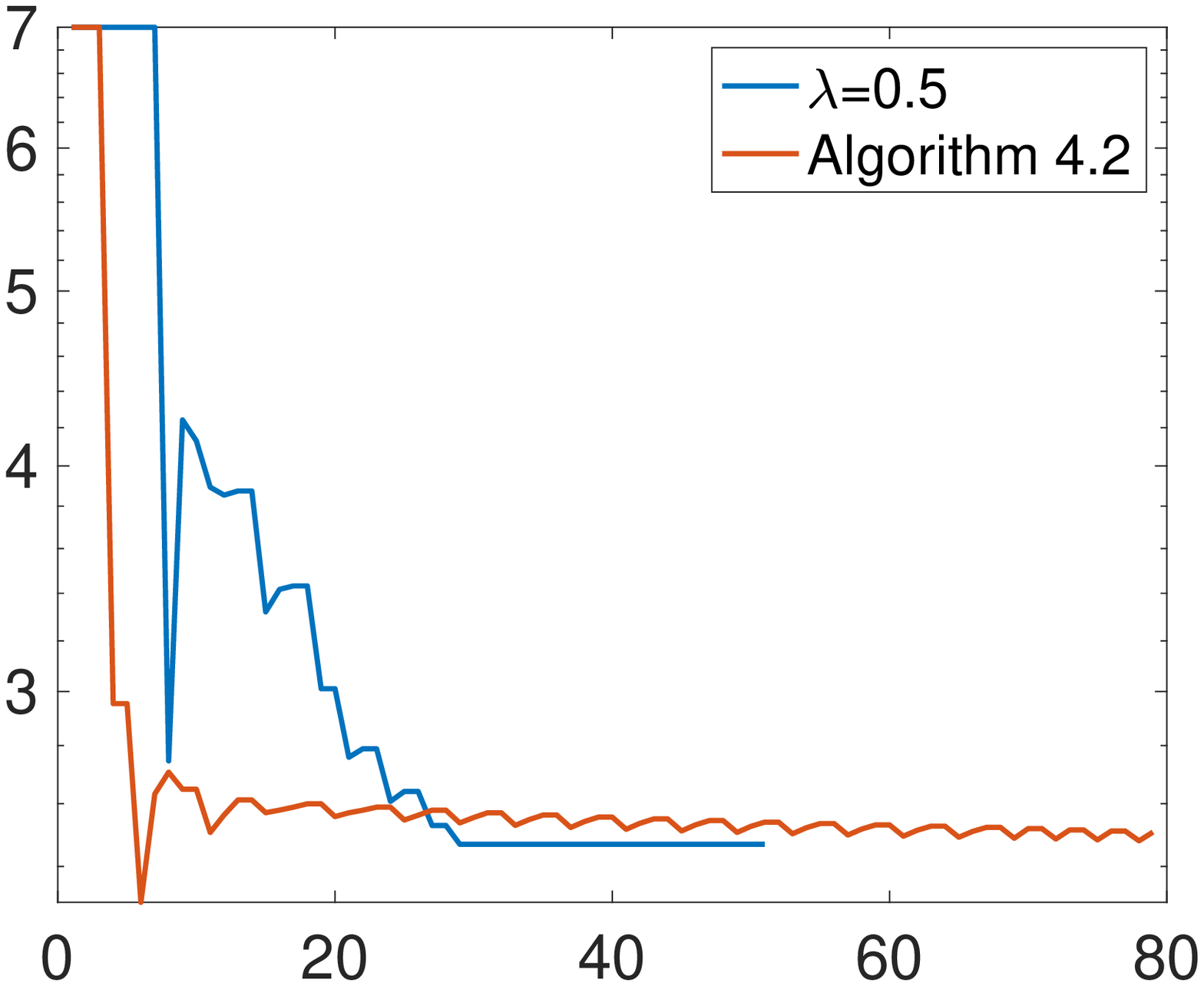}
\end{tabular}
\caption{Illustrative \texttt{satellite} test problem. {\bf (a)} Relative errors versus total iterations for the exact LSQR-based hybrid method (with $A=A(y_0)$), iLSQR-based hybrid method without error control, and Algorithm \ref{alg:new}. {\bf (b)} Relative errors versus total iterations for Algorithm \ref{alg:old} and Algorithm \ref{alg:new}. {\bf (c)} Relative errors versus total iterations for Algorithm \ref{alg:new} with fixed $\lambda$ and with iteration-dependent $\lambda$ chosen by wGCV.  {\bf (d)} Blurring parameter $\sigma_1=\sigma_2$ versus total iterations for LSQR-based hybrid method (with $y=y_0=7$), iLSQR-based hybrid method without error control, and Algorithm \ref{alg:new}. {\bf (e)} Blurring parameter $\sigma_1=\sigma_2$ versus total iterations for Algorithm \ref{alg:old} and Algorithm \ref{alg:new}. {\bf (f)} Blurring parameter $\sigma_1=\sigma_2$ versus total iterations for Algorithm \ref{alg:new} with fixed $\lambda$ and with iteration-dependent $\lambda$ chosen by wGCV. Black markers in {\bf (b)} and {\bf (e)} highlight the values at each outer iteration.}\label{fig:testing}
\vspace{-0.5cm}
\end{figure}

\begin{figure}
\begin{tabular}{ccc}
\hspace{-1.6cm}{\small \bf Hybrid-iLSQR} & \hspace{-1.6cm}{\small \bf Algorithm \ref{alg:old}} & \hspace{-1.6cm}{\small \bf Algorithm \ref{alg:new}}\vspace{-0.1cm}\\
\hspace{-1.6cm}{\small (it. 30, $\RRE_x$ 0.5819)} & \hspace{-1.6cm}{\small (it. 577, $\RRE_x$ 0.2454)} & \hspace{-1.6cm}{\small (it. 79, $\RRE_x$ 0.2474)}\vspace{-0.1cm}\\
\hspace{-1.6cm}\includegraphics[width=5.5cm]{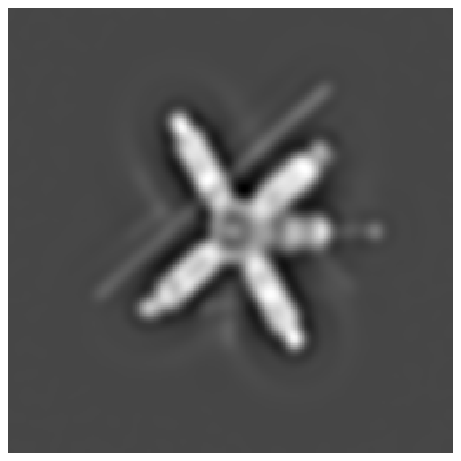} &  
\hspace{-1.6cm}\includegraphics[width=5.5cm]{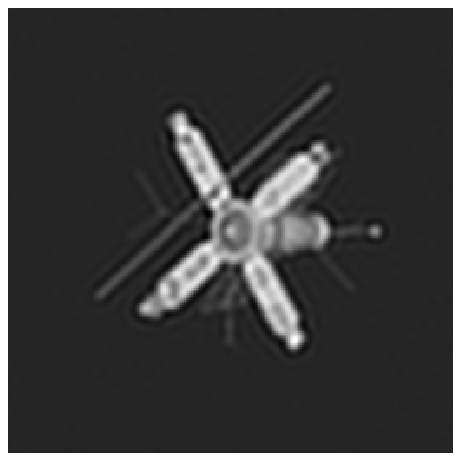} & 
\hspace{-1.6cm}\includegraphics[width=5.5cm]{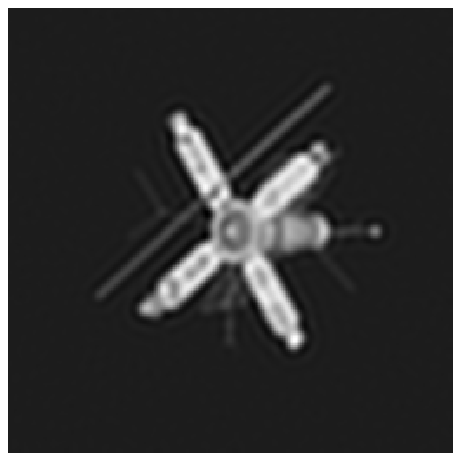}\vspace{-1.0cm}\\
\end{tabular}
\caption{Illustrative \texttt{satellite} test problem: images computed by different solvers (total iteration number and relative error are reported in brackets). Hybrid-iLSQR is implemented without error control. 
}\label{fig:reconstr}
\vspace{-0.5cm}
\end{figure}


\subsection{Computational strategies} 
In this section we discuss some implementation details, which were just briefly mentioned in the previous sections. In particular, we focus on the computation of error bounds for controlling inexactness of the solvers, adaptive regularization parameter choice, and stopping criteria. 
\paragraph{Computable bounds for $\|E_j\|$}
As mentioned in the previous sections and illustrated in frame (a) of Figure \ref{fig:testing}, error control must be implemented to expect meaningful approximations when using inexact methods.
In this section we describe how bounds for the errors can be cheaply obtained when considering blurring matrices. 

Under specific assumptions on the PSF and the boundary conditions, the coefficient matrix in (\ref{eq:linsys}) has a special structure that can be exploited for computing its eigendecomposition or its SVD: we refer to \cite[Chapter 4]{book1} for an overview. We remark that, within the blind deconvolution framework described in Section \ref{ssec:pb}, such assumptions (e.g., rank-1 or symmetric PSF, circulant boundary conditions) cannot generally be made, as they will result in corrupted reconstructions; see, for instance, \cite{DykesL2021} and the references therein. In the following, and in agreement with Section \ref{ssec:pb}, we provide details for the reflexive boundary conditions case. 
%

It is well known that, when the PSF is doubly symmetric and reflexive boundary conditions are imposed, the blurring matrix 
is a block Toeplitz-plus-Hankel matrix with Toeplitz-plus-Hankel blocks and can be diagonalized using the discrete cosine transform (DCT), which can be implemented as fast cosine transform (FCT); see \cite{Ng1999}. More precisely, at the $j$th iteration of Algorithm \ref{alg:new}, one can write
\[
A(y_{j-1})=C^T \Lambda_j C=C^T \text{diag}( \lambda^{(j)}_1,...,\lambda^{(j)}_n) C\,,\quad\mbox{where}\quad
\lambda^{(j)}_l = \,[CA(y_{j-1})e_1]_l/[C]_{l,1}
\]
and $C$ is the two-dimensional orthogonal discrete cosine transform matrix. Therefore, according to the definition in (\ref{def:error}), for $i\leq j$, 
\begin{eqnarray}
\|E_i^j\|=\|A(y_{i-1})-A(y_{j-1})\|=\|C^T (\Lambda_i-\Lambda_j) C\|&=&\|\Lambda_i-\Lambda_j \|\label{errdct}\\
&=&\max_{k=1,\dots,n}\left|\lambda^{(i)}_k-\lambda^{(j)}_k\right\vert\,.\nonumber
\end{eqnarray}
Moreover, thanks to the normalization condition for the PSF in (\ref{GaussPSF}), it follows that, in this case, $\|A(y_{i-1})\|=1$, $i\leq j$: this is useful if the bound (\ref{hiLSQRbound}), involving the estimate (\ref{zbound}) is employed. 

If the PSF is not doubly symmetric, then one can replace the blurring matrix with its optimal (in the Frobenius norm $\|\cdot\|_F$) approximation obtained through cosine transformation. Namely, at the $j$th iteration of Algorithm \ref{alg:new}, one approximates $A(y_{j-1})$ by 
\begin{equation}\label{dctapprox}
\bar{A}(y_{j-1})=C^T\bar{\Lambda}_jC,\quad\mbox{where}\quad  \bar{\Lambda}_j=\mbox{diag}\left(\frac{1}{2}C(A(y_{j-1}) + A^T(y_{j-1}))C^T\right)\,.
\end{equation}
In other words, one should just take the symmetric part of the PSF and form a structured matrix with respect to it. The error associated to such optimal approximation is $\|\bar{A}(y_{j-1})-A(y_{j-1})\|_F^2=1/2\|C(A(y_{j-1})-A^T(y_{j-1}))C^T\|_F^2$, i.e., $\bar{A}(y_{j-1})$ is a good approximation of $A(y_{j-1})$ if the blurring matrix (or the PSF $P(y_{j-1})$) is close to symmetric. The approximation (\ref{dctapprox}) is typically used when devising preconditioners for image deblurring problems (\ref{eq:linsys}); see again \cite{Ng1999}. Within framework described in Section \ref{ssec:solerror} we propose to use approximation (\ref{dctapprox}) to efficiently control the inexactness of the hybird-iLSQR methods, i.e., for $i\leq j$, we take
\begin{equation}\label{errdctapprox}
\|{E}_i^j\|\simeq \|\bar{E}_i^j\|:=\|\bar{A}(y_{i-1})-\bar{A}(y_{j-1})\|=\|\bar{\Lambda}_i-\bar{\Lambda}_j \|\,.
\end{equation}

\paragraph{Parameter choice}
As already remarked in \cite{JulianneJim}, 
being able to adaptively set the regularization parameter $\lambda$ in (\ref{eqtikhybis}) is of pivotal importance when performing blind deconvolution, since the amount of regularization should be linked to the amount of blur, as specified by the varying unknown blur parameters $y$.
When using inexact hybrid methods in this setting, since $y$ depends on the current $\lambda$ through the current approximate solution $x$, it cannot generally be guaranteed that the assumptions of Proposition \ref{prop:spacei} hold, i.e., the approximation subspace for the solution $x$ may not be invariant with respect to $\lambda$. For instance, for the illustrative example described in Section \ref{ssec:pb}, all the possible cross products of the orthonormal basis vectors obtained after 10 hybrid-iLSQR iterations performed with different values of $\lambda$ are nonzero, implying that such basis vectors span two different spaces. These quantities are displayed in frames (c) and (f) of Figure \ref{fig:theoryill}.  
As a consequence, although one can successfully regularize the projected inexact problem (as described below), this may not be equivalent to regularizing the exact full-dimensional problem.

If a good estimate of the magnitude of the noise $\|e\|$ is available, we can apply the discrepancy principle to the projected problem (\ref{eq:iprojregaugpb_1}), i.e., at the $k$th hybrid-iLSQR iteration we compute $\lambda=\lambda_k$ such that
\[
\|M_ks_{\lambda,k}-\beta e_1\|^2 = \tau\|e\|^2\quad\mbox{where $\tau$ is a safety threshold (typically $\tau>1$, $\tau\simeq 1$)}.
\]
We note that satisfying the above condition does not guarantee that the `exact' discrepancy principle is satisfied. Indeed, similarly to (\ref{iLSQRbound}), we get
\begin{eqnarray*}
\tau\|e\|^2 - \eps\leq \|Ax_{\lambda,k}- b\|^2
\leq \tau\|e\|^2 + \eps\,,\quad\mbox{where}\quad
\eps=\|E_0x_0\|^2 + \sum_{l=1}^k\|E_l\|^2|[s\lamk]_l |^2
\end{eqnarray*}
is controlled through the inexactness bounds derived in Section \ref{sec: theory}. Recalling that $\|Ax_{\lambda,k}- b\|$ is an increasing function of $\lambda$, and depending on the above bounds being quite strict or loose, $x_{\lambda,k}$ may be under- or over- regularized for the original problem.

Alternatively, following \cite{JulianneJim}, we may use the weighted GCV (wGCV) criterion applied to the projected problem (\ref{eq:iprojregaugpb_1}), i.e., at the $k$th hybrid-iLSQR iteration we compute
\[
\lambda_k=\arg\min_{\lambda\geq 0}\frac{k\|(I-M_k(M_k^TM_k+\lambda^2I)^{-1}M_k^T)\beta e_1\|^2}{(\mbox{trace}(I-\omega M_k(M_k^TM_k+\lambda^2I)^{-1}M_k^T))^2}\,.
\]
We remark that other parameter choice rules typically employed within (exact) hybrid methods (see, for instance, \cite{survey, newsurvey}), can be adapted to work with inexactness; 
moreover, other strategies that rely on structured approximations of the blurring matrix can be exploited, too (as suggested by \cite{chungframe}). 

\paragraph{Stopping criteria}
When solving the blind deconvolution problem, one is recovering the unknown sharp image $x\in\R^n$ as well as the unknown blurring parameters $y\in\R^p$: for this reason, effective stopping criteria should be devised, based on the behavior of both variables. Moreover, when inexact methods are applied as described in Section \ref{ssec:solerror}, the effect of the errors in the estimated blurring matrix has to be considered: indeed, as specified in line 3 of Algorithm \ref{alg:new}, one should restart the hybrid-iLSQR method as soon as the difference between exact and inexact residual (bounded as in (\ref{hiLSQRbound}) or (\ref{iLSQRboundalt})) exceeds a pre-specified or adaptively estimated tolerance $\eps$. 
%
%
Looking at the progress of both $x$ and $y$ it would be natural to stop the iterations of Algorithms \ref{alg:new} as soon as the relative gradient norm $\|\nabla_yh(y)\|/\|\nabla_yh(y_0)\|$ of the objective function is approximately zero, which means that a stationary point for the objective function $h$ (defined in (\ref{hdef})) has been reached. 
In addition to this, one may monitor the (relative) stabilization of some relevant quantities, e.g., stop when 
\[
\frac{|\lambda_k-\lambda_{k-1}|}{\lambda_{k-1}}\leq \theta_1\,,\quad
\frac{\|x\lamk-x_{\lambda,k-1}\|}{\|x_{\lambda,k-1}\|}\leq \theta_2\,,\quad
\frac{\|y_k-y_{k-1}\|}{\|y_{k-1}\|}\leq\theta_3\,,\quad k=2,3, \dots,
\]
where $\theta_1,\,\theta_2,\,\theta_3>0$ are user-specified tolerances. If wGCV is employed to set $\lambda$, it can be also (simultaneously) used as a stopping criterion; we refer to \cite{JulianneJim} for additional details.

\paragraph{Further comments on 
hybrid-iLSQR for the illustrative test problem 
in Figure \ref{fig:testP}}
We conclude this section by providing some comments about the performance of the hybrid-iLSQR method, especially in comparison with the well-established Algorithm \ref{alg:old}, which is implemented with `cold' restarts (i.e., taking $x_0=0$ at line 10); 
Algorithm \ref{alg:new} is instead implemented with `warm' restarts, i.e., taking $x_0=x\lamk$ at line 10. Note that, to enforce that $\sigma_1=\sigma_2$ (and $\rho=0$) in (\ref{GaussPSF}) and keep the illustrative example simple, the solvers are coded in such a way that only one blurring parameter, i.e., $y=\sigma_1$, has to be computed. Looking at Figures \ref{fig:testing} and \ref{fig:reconstr}, it is evident that both Algorithms \ref{alg:old} and \ref{alg:new} eventually compute reconstructions of the same quality, as the values of the relative errors and the blurring parameter are quite similar; in particular, since $\sigma_1=2.5$, $y$ is better approximated using Algorithm \ref{alg:new}). All the graphs in Figure \ref{fig:testing} display the behavior of the methods versus the total number of iterations. In particular, the first inner loops of Algorithm \ref{alg:old} are affected by the so-called semi-convergence phenomenon (i.e., permanent increase of the error after only a few iterations): this is evident looking at frame (b), and can probably be mitigated by a more accurate tuning of the inner stopping criteria; nevertheless, the values at the outer iterations are generally decreasing. Some oscillations in the reconstruction quality also affect Algorithm \ref{alg:new} implemented with the wGCV criterion, while we note that, for this test problem, the behavior versus the number of iterations seems more stable when a fixed regularization parameter is employed (see Figure \ref{fig:testP}, frames (c) and (f)). Algorithm \ref{alg:new} is implemented with error control as described in (\ref{errdct}): note that the FCT-based decomposition appearing therein is exact for this test problem, as $\sigma_1=\sigma_2$ is enforced at each iteration, implying that the PSF is doubly symmetric. Finally, we remark that the performance of both Algorithms \ref{alg:old} and \ref{alg:new} depends on the initial guess for the blurring parameters (for the examples shown here, $y_0=7$).

The cost of $k$ iterations of Algorithms \ref{alg:old} and \ref{alg:new} is comparable when $k\ll n$. Indeed, both algorithms have to compute $k$ matrix-vector products with $A$ and $A^T$ to generate the approximation subspace for $x$: while this in general costs $O(kn^2)$ flops, it can be reduced to $O(kn\log n)$ if additional assumptions on the PSF (and the boundary conditions) hold; see \cite[Chapter 4]{book1} for an overview. Generating an orthonormal basis for the solution subspace amounts to $O(n)$ flops for Algorithm \ref{alg:old} (thanks to short recurrences) and $O(k^2n)$ flops for Algorithm \ref{alg:new} (because of full orthonormalization). Solving the projected problem costs $O(k)$ flops for Algorithm \ref{alg:old} (exploiting the bidiagonal structure of the matrix in (\ref{eq:projregpb})) and $O(k^2)$ for Algorithm \ref{alg:new}. Following the startegy in \cite{JulianneJim}, the cost of updating the blurring parameters for both algorithms amounts to $O(pn)$ flops for computing the Jacobian (\ref{def:Jacob}) and $O(p^2)$ for performing a Gauss-Newton step (\ref{yupdateapprox}): these are negligible if $p\ll n$.

\subsection{Numerical experiments}\label{ssec:numer}

To further validate the performance of Algorithm \ref{alg:new} we display the results of one additional blind image deblurring test problem: we take the \texttt{cameraman} test image of size $256\times 256$ pixels (shown in the top left frame of Figure \ref{fig:cameraim}) and we corrupt it by applying a Gaussian blur (\ref{GaussPSF}) with parameters $y_\true=[3, 4, 0.5]^T$ and Gaussian white noise of level $\|e\|/\|b_\true\|=10^{-2}$. We start both Algorithms \ref{alg:old} and \ref{alg:new} with initial guesses $x_0=0$ and $y_0=[5, 6, 1]^T$. Figure \ref{fig:cameraim} displays the reconstruction of the images and the PSFs obtained by the two methods: although the values of $\RRE_x$ are comparable, the image computed by Algorithm \ref{alg:old} still appears slightly blurred, while the image computed by Algorithm \ref{alg:new} appears sharper but currupted by some artefacts; Algorithm \ref{alg:old} computes a better approximated PSF than Algorithm \ref{alg:new}: indeed, as it can be also seen in frames (b) and (e) of Figure \ref{fig:cameracomp}, the parameter $\rho$ governing the orientation of the PSF is overestimated by Algorithm \ref{alg:new}. The remaining frames of Figure \ref{fig:cameracomp} display the behavior of relevant quantities computed by Algorithm \ref{alg:old} and different variants of Algorithm \ref{alg:new}, versus the (total) number of iterations. As it can be seen in frame (a), although the first inner loops of Algorithm \ref{alg:old} are affected by semi-convergence, the relative error decreases as the outer iterations proceed (especially during the final cycles). As it can be seen in frames (c) and (f), not considering error control in Algorithm \ref{alg:new} results in spoiled reconstructions; the discrepancy principle performs better than wGCV when used to adaptively set the regularization parameter for this test problem. 

\begin{figure}
\begin{tabular}{ccc}
\hspace{-1.6cm}{\small \bf exact} & \hspace{-1.6cm}{\small \bf Algorithm \ref{alg:old}} & \hspace{-1.6cm}{\small \bf Algorithm \ref{alg:new}}\\
\hspace{-1.6cm} & \hspace{-1.6cm}{\small (it. 927, $\RRE_x$ 0.1286)} & \hspace{-1.6cm}{\small (it. 82, $\RRE_x$ 0.1219)}\vspace{-0.0cm}\\
%
\hspace{-1.6cm}\includegraphics[width=5.5cm]{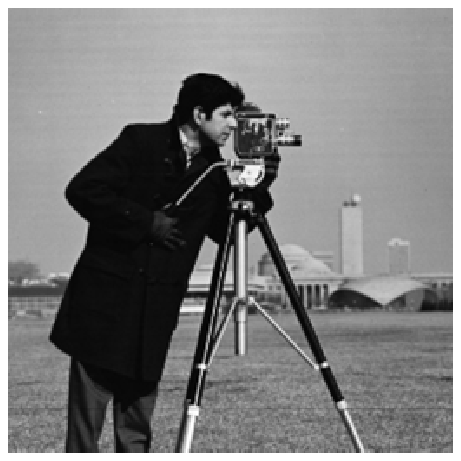} &
\hspace{-1.6cm}\includegraphics[width=5.5cm]{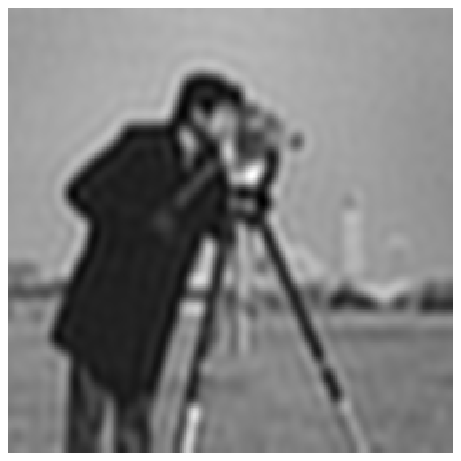} & 
\hspace{-1.6cm}\includegraphics[width=5.5cm]{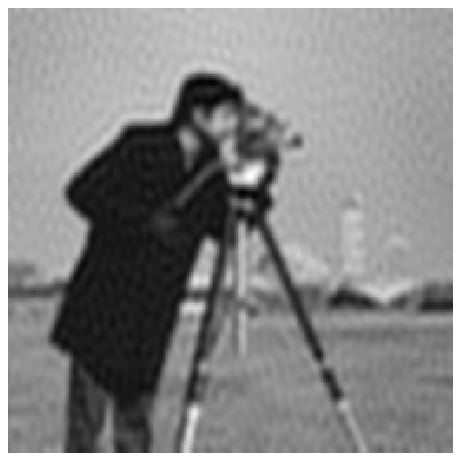}\vspace{-0.9cm}\\
\hspace{-1.6cm} & \hspace{-1.6cm}{\small (it. 927, $\RRE_y$ 0.0679)} & \hspace{-1.6cm}{\small (it. 82, $\RRE_y$ 0.1438)}\vspace{-0.0cm}\\
\hspace{-1.6cm}\includegraphics[width=5.5cm]{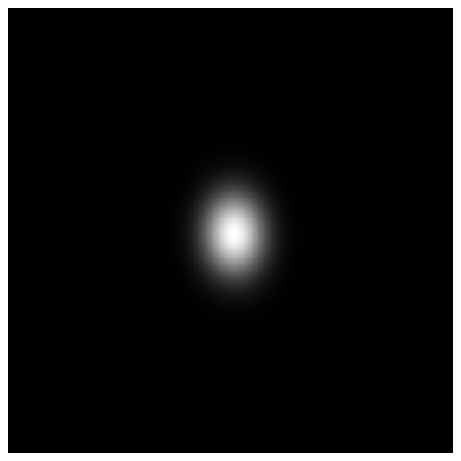} &
\hspace{-1.6cm}\includegraphics[width=5.5cm]{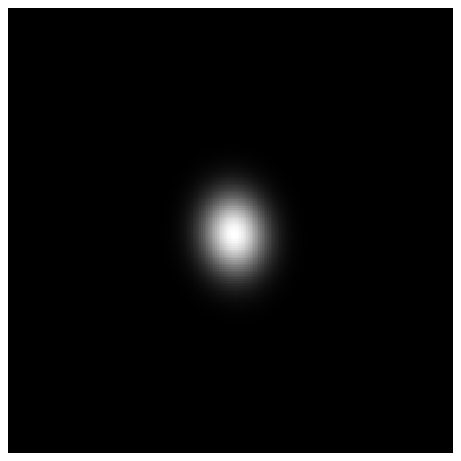} & 
\hspace{-1.6cm}\includegraphics[width=5.5cm]{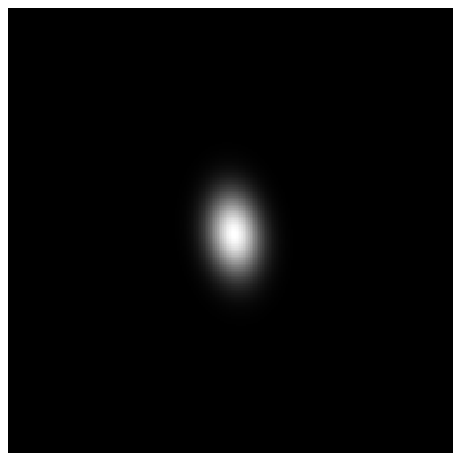}\vspace{-1.0cm}
\end{tabular}
\caption{\texttt{cameraman} test problem. Exact quantities and reconstructions thereof; 
total iteration number and relative reconstruction error are reported in brackets. 
}\label{fig:cameraim}\vspace{-0.5cm}
\end{figure}

\begin{figure}
\begin{tabular}{ccc}
\hspace{-1.0cm}{\small \bf (a) $\RRE_x$} & \hspace{-0.7cm}{\small \bf (b) blur param. $y$}& \hspace{-0.7cm}{\small \bf (c) $\RRE_x$}\vspace{-0.0cm}\\
\hspace{-1.0cm}\includegraphics[width=4.7cm]{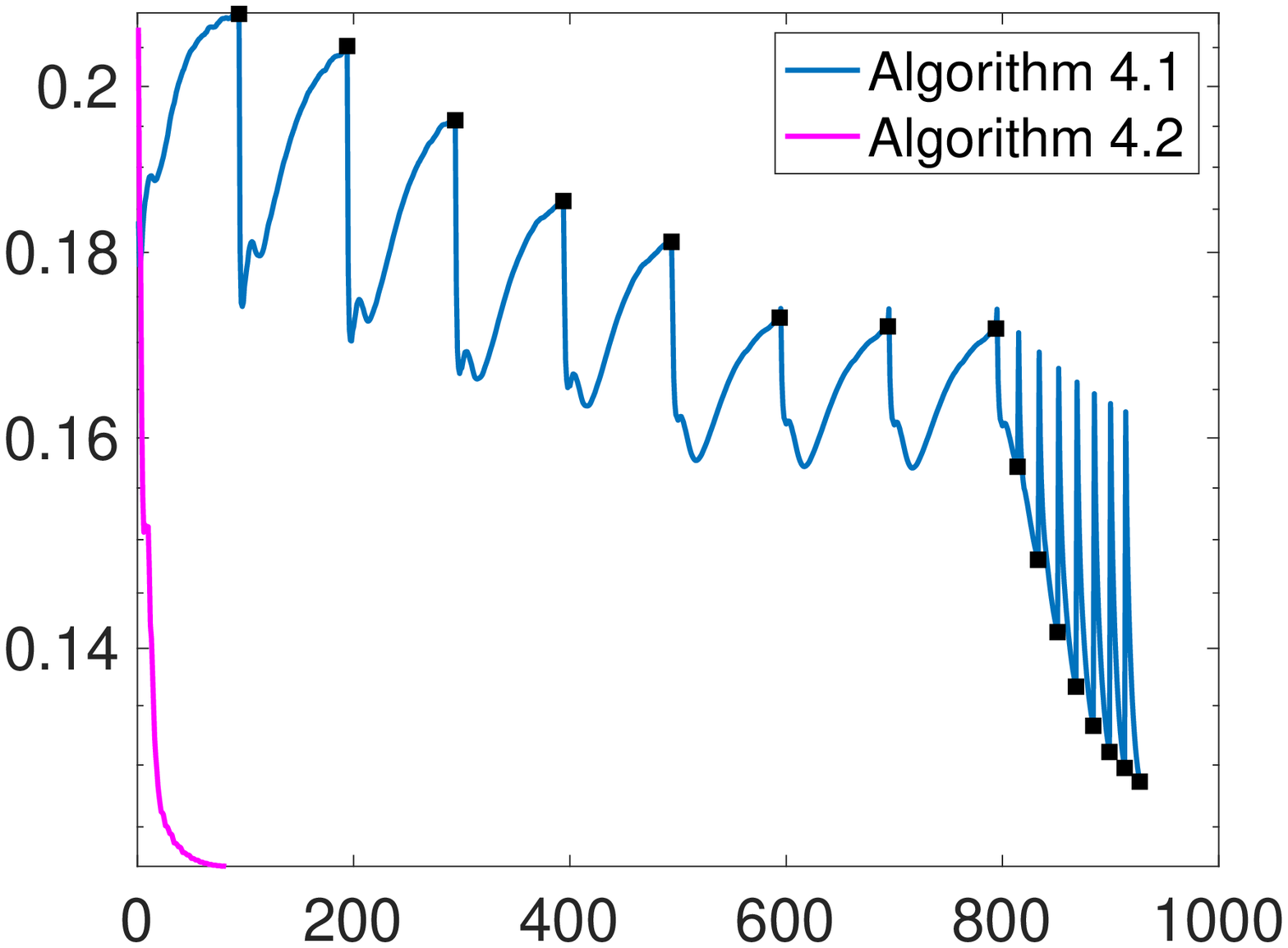} &
\hspace{-0.7cm}\includegraphics[width=4.7cm]{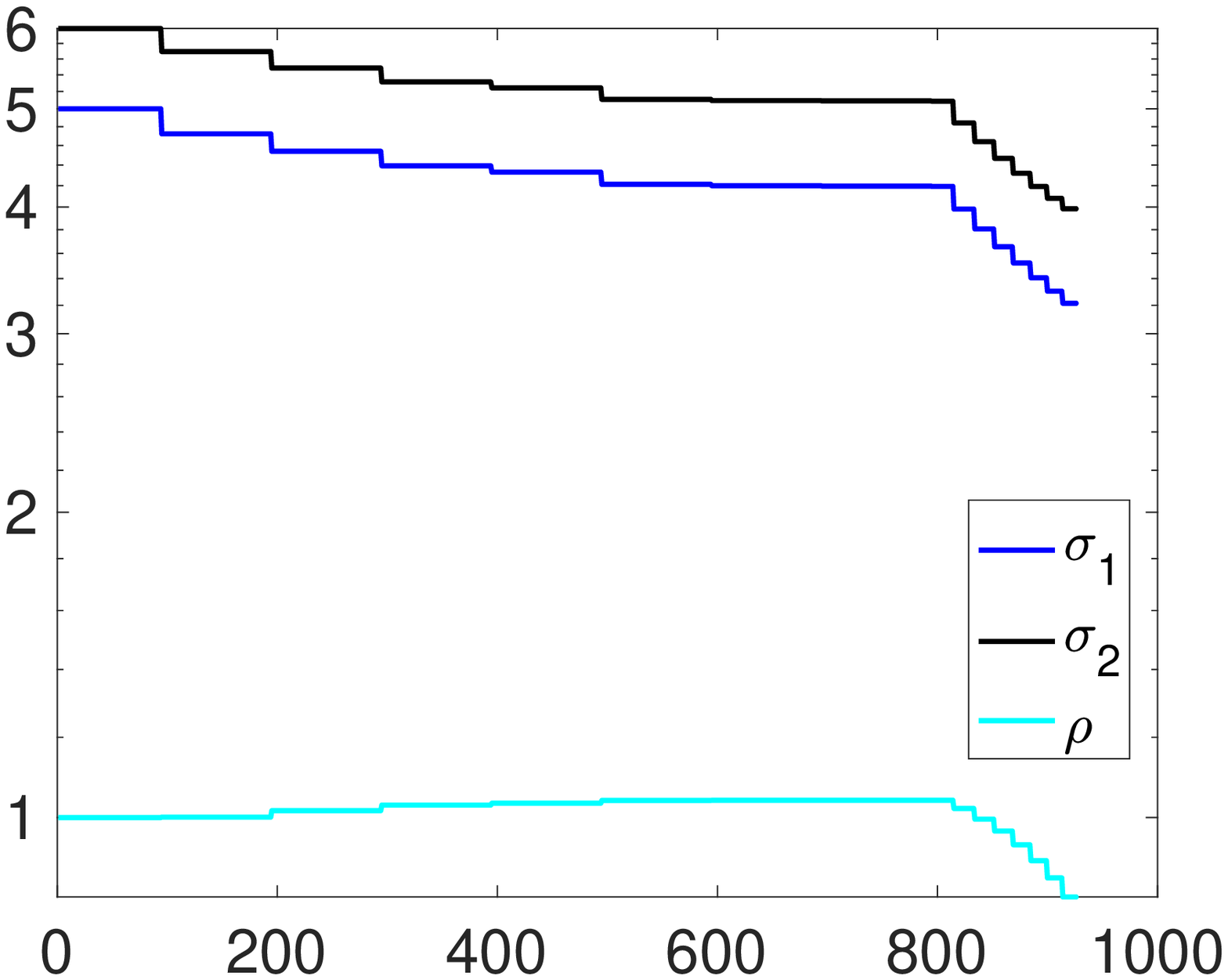} &
\hspace{-0.7cm}\includegraphics[width=4.7cm]{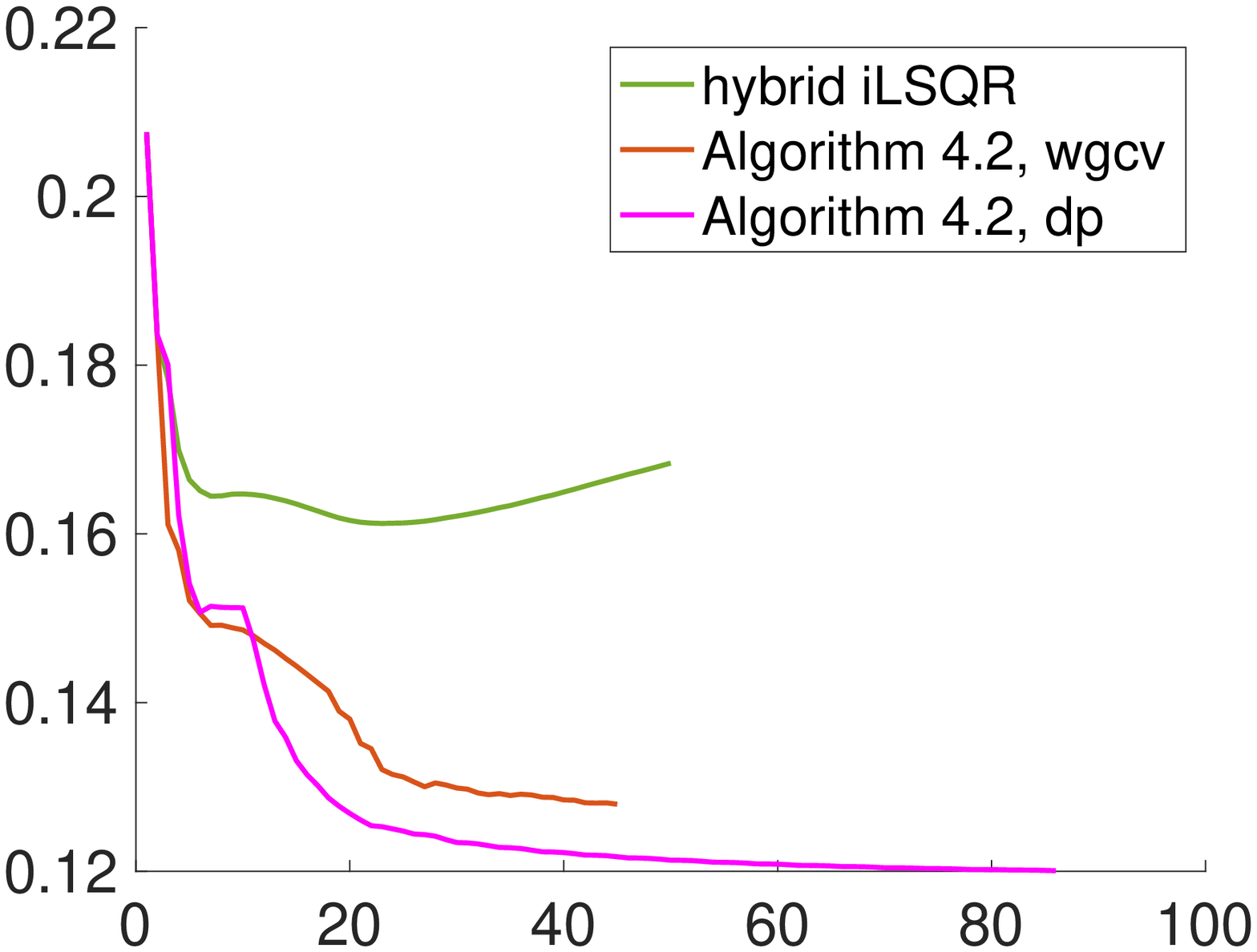}\\
\hspace{-1.0cm}{\small \bf (d) $\RRE_y$} & \hspace{-0.7cm}{\small \bf (e) blur param. $y$} & \hspace{-0.7cm}{\small \bf (f) $\RRE_y$}\vspace{-0.0cm}\\
\hspace{-1.0cm}\includegraphics[width=4.7cm]{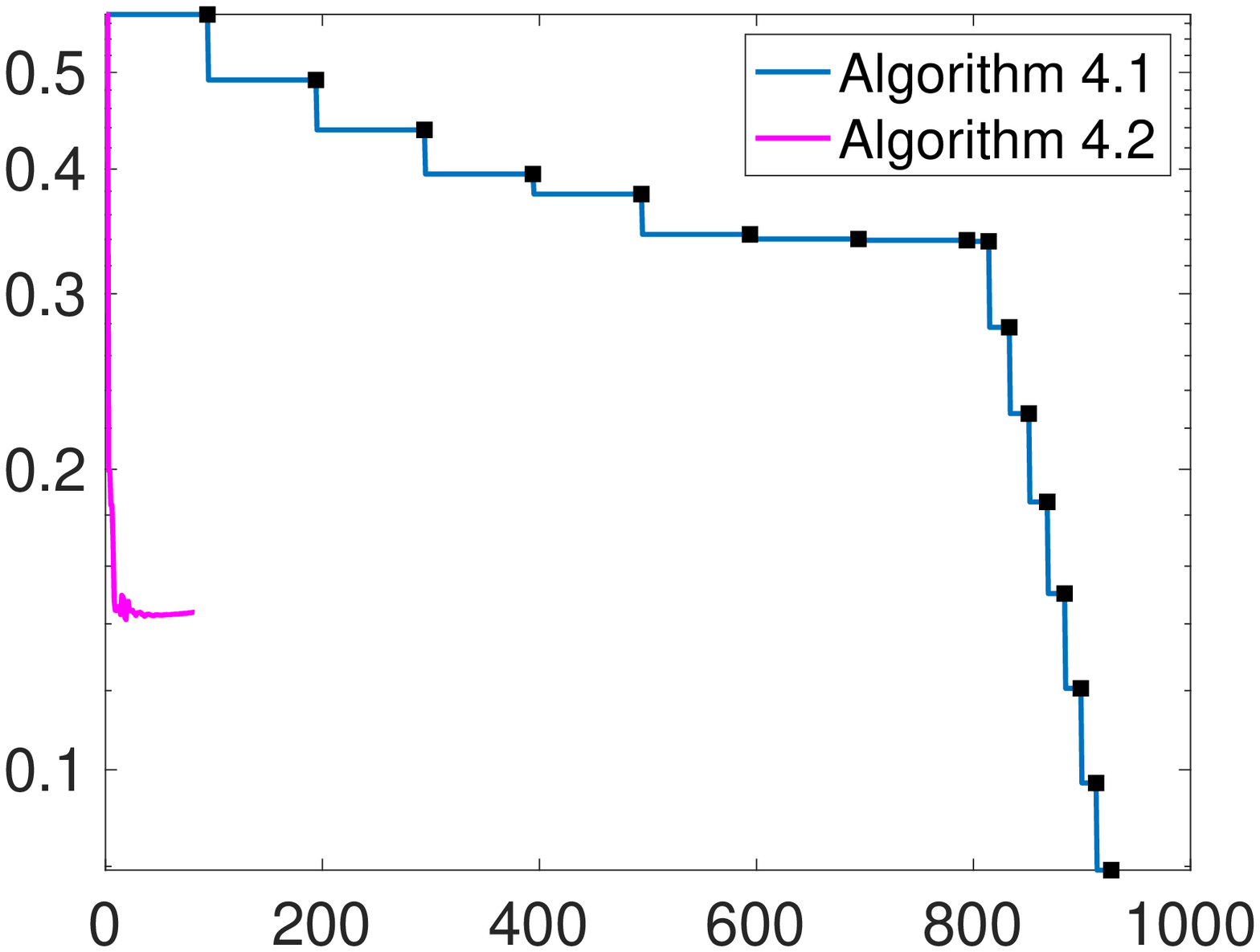} &
\hspace{-0.7cm}\includegraphics[width=4.7cm]{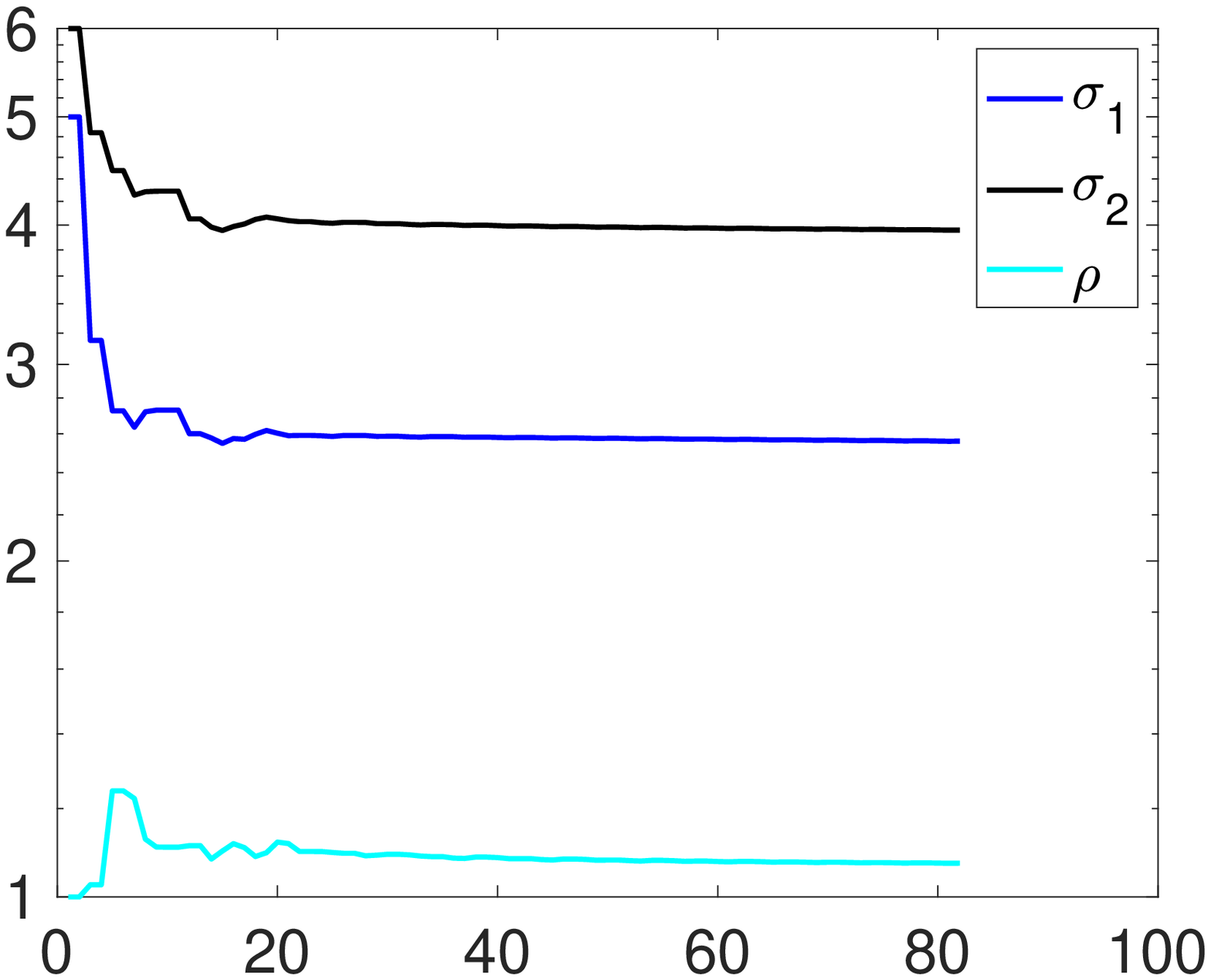} &
\hspace{-0.7cm}\includegraphics[width=4.7cm]{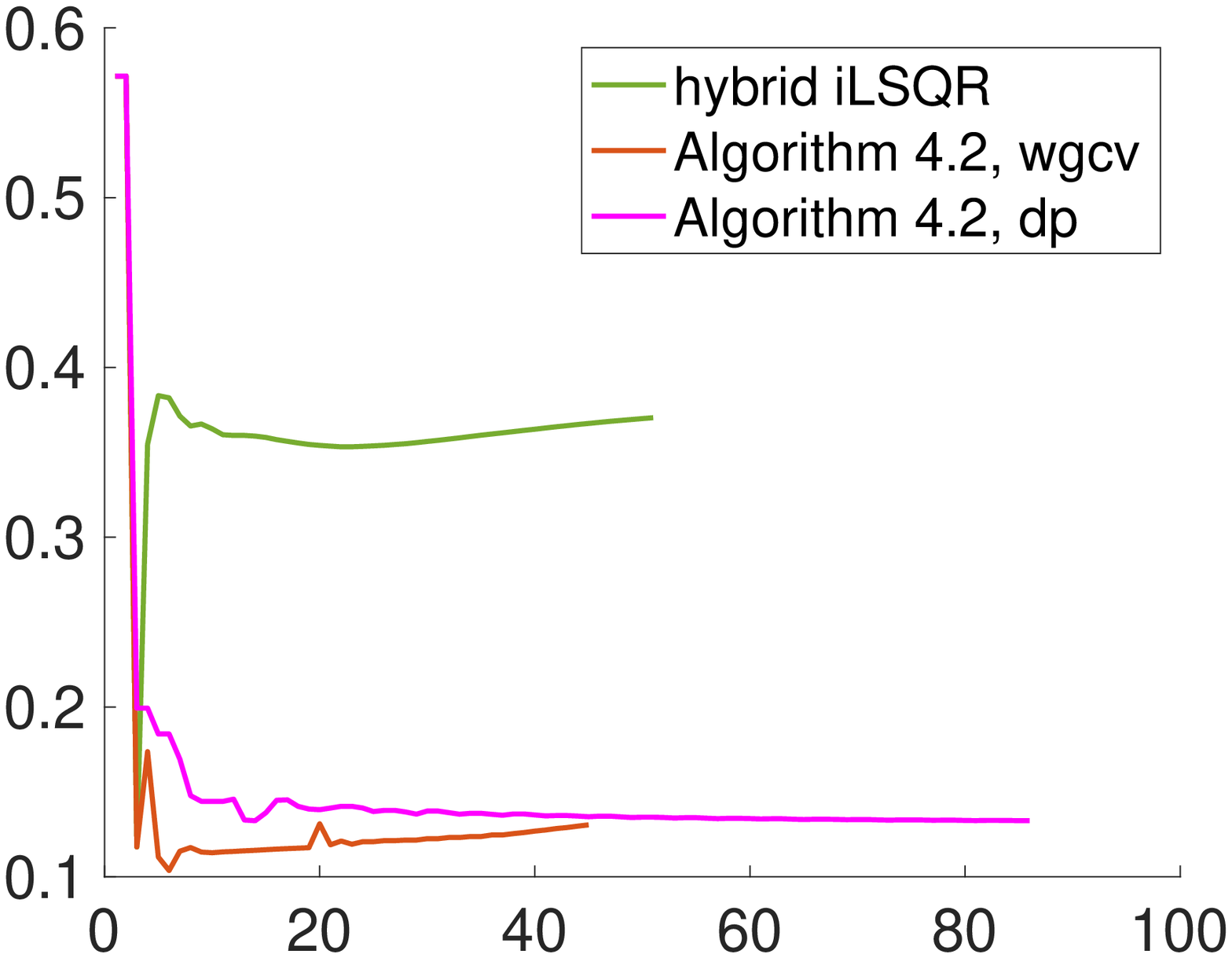}
\end{tabular}
\caption{\texttt{cameramen} test problem. {\bf (a)} $\RRE_x$ versus total iterations for Algorithms \ref{alg:old} and \ref{alg:new} implemented with the discrepancy principle. {\bf (b)} Values of the blurring parameter $y=[\sigma_1,\sigma_2,\rho]^T$ versus total iterations for Algorithm \ref{alg:old}. {\bf (c)} $\RRE_x$ versus iterations for the hybrid-iLSQR method without error control, and Algorithm \ref{alg:new} with the wGCV criterion or the discrepancy principle for setting $\lambda$.  {\bf (d)} $\RRE_y$ versus total iterations for Algorithms \ref{alg:old} and \ref{alg:new} implemented with the discrepancy principle. {\bf (e)} Values of the blurring parameter $y=[\sigma_1,\sigma_2,\rho]^T$ versus iterations for Algorithm \ref{alg:new}. {\bf (f)} $\RRE_y$ versus iterations for the hybrid-iLSQR method without error control, and Algorithm \ref{alg:new} with the wGCV criterion or the discrepancy principle for setting $\lambda$. Black markers in {\bf (a)} and {\bf (d)} highlight the values at each outer iteration.}\label{fig:cameracomp}\vspace{-0.5cm}
\end{figure}

\section{Conclusions and outlook}\label{sec:end}

In this paper we introduced and analysed the new iLSQR and iCGLS methods: these are inexact Krylov methods based on the inexact Golub-Kahan decomposition that, when used as purely iterative methods, or in combination with Tikhonov regularization, can be efficiently employed to regularize large-scale ill-posed problems, provided that the amount of inexactness is monitored at each iteration. We tested the new methods on separable nonlinear inverse problems arising in 
blind deblurring, handled with a variable projection approach. 

Future work will be performed with the goals of: (i) deriving new inexact solvers other than iLSQR and iCGLS, combined with methods other than standard form Tikhonov; (ii) handling nonlinear separable inverse problems other than blind deblurring. Regarding (i): still leveraging the inexact Golub-Kahan decomposition, one may devise an inexact version of LSMR, similarly to what was done for flexible solvers; see \cite{JulianneSilvia}. Alternatively, one may employ the inexact GMRES and FOM solvers based on the inexact Arnoldi decomposition described in \cite{inexact1}. Moreover, one may consider the combination of inexact methods and general-form Tikhonov regularization, where the penality term $\lambda^2\|Lz\|_2^2$ replaces $\lambda^2\|z\|_2^2$ in (\ref{tikhx0}), and where the regularization matrix $L\in\R^{n'\times n}$ enforces some prior information about the solution. 
Regarding (ii): the survey paper \cite{GolubVP} describes a number of applications where the variable projection method is routinely used and that, therefore, may be potentially adapted to work with inexact Krylov methods. These include many inverse problems, such as magnetic resonance imaging in medicine, superresolution of images, instrumental calibration, but also 
machine learning tasks such as the training of neural networks. 

\section{Acknowledgements}
We would like to thank Julianne Chung for sharing with us some of the codes used to produce the results in \cite{JulianneJim}.

\bibliographystyle{siamplain}
\bibliography{biblio}
\end{document}